\newtheorem{tm}{Theorem}[section]
\newtheorem{lemma}[tm]{Lemma}
\newtheorem{prop}[tm]{Proposition}
\newtheorem{cor}[tm]{Corollary}
\newtheorem{theorem}{Theorem}[section]
\newtheorem{corollary}[theorem]{Corollary}
\newtheorem{proposition}[theorem]{Proposition}
\newcommand{\beqa}{\begin{eqnarray*}}
\newcommand{\eeqa}{\end{eqnarray*}}
\def\<{\left<}
\def\>{\right>}
\def\mv1{M_v^1}
\def\o{\omega}
\def\Ren{\mathbb{R}^d}
\def\Sn2{S_{2}(L^{2}(\Ren))}
\def\S1{S_{1}(L^{2}(\Ren))}
\def\sig00{\sigma_{0,0}}
\begin{document}
\title[Time-frequency Analysis of Born-Jordan Operators]{Time-frequency
Analysis of Born-Jordan Pseudodifferential Operators}
\author{Elena Cordero}
\address{Dipartimento di Matematica, Universit\`a di Torino, Dipartimento di
Matematica, via Carlo Alberto 10, 10123 Torino, Italy}
\email{elena.cordero@unito.it}
\thanks{}
\author{Maurice de Gosson}
\address{University of Vienna, Faculty of Mathematics,
Oskar-Morgenstern-Platz 1 A-1090 Wien, Austria}
\email{maurice.de.gosson@univie.ac.at}
\thanks{}
\author{Fabio Nicola}
\address{Dipartimento di Scienze Matematiche, Politecnico di Torino, corso
Duca degli Abruzzi 24, 10129 Torino, Italy}
\email{fabio.nicola@polito.it}
\thanks{}
\subjclass[2010]{47G30,42B35}
\keywords{Time-frequency analysis, Wigner distribution, Born-Jordan
distribution, modulation spaces, Wiener amalgam spaces}
\date{}

\begin{abstract}
Born-Jordan operators are a class of pseudodifferential operators arising as
a generalization of the quantization rule for polynomials on the phase space
introduced by Born and Jordan in 1925. The weak definition of such operators
involves the Born-Jordan distribution, first introduced by Cohen in 1966 as
a member of the Cohen class. We perform a time-frequency analysis of the
Cohen kernel of the Born-Jordan distribution, using modulation and Wiener
amalgam spaces. We then provide sufficient and necessary conditions for
Born-Jordan operators to be bounded on modulation spaces. We use modulation
spaces as appropriate symbols classes.
\end{abstract}

\maketitle




\section{Introduction}

In 1925 Born and Jordan \cite{bj} introduced for the first time a rigorous
mathematical explanation of the notion of ``quantization''. This rule was
initially restricted to polynomials as symbol classes but was later extended
to the class of tempered distribution $\mathcal{S}^{\prime }({\mathbb{R}^{2d}%
})$ \cite{bogetal,Cohen2}. Roughly speaking, a quantization is a rule which
assigns an operator to a function (called symbol) on the phase space ${%
\mathbb{R}^{2d}}$. The Born-Jordan quantization was soon superseded by the
most famous Weyl quantization rule proposed by Weyl in \cite{Weyl1927},
giving rise to the well-known Weyl operators (transforms) (see, e.g. \cite%
{Wongbook}).

Recently there has been a regain in interest in the Born-Jordan
quantization, both in Quantum Physics and Time-frequency Analysis \cite{deGossonLuef2015}. The
second of us has proved that it is the correct rule if one wants matrix and
wave mechanics to be equivalent quantum theories \cite{gofound}. Moreover,
as a time-frequency representation, the Born-Jordan distribution has been
proved to be better than the Wigner distribution since it damps very well
the unwanted ``ghost frequencies'', as shown in \cite{bogetal,turunen}. For
a throughout and rigorous mathematical explanation of these phenomena we
refer to \cite{cgn2} whereas \cite[Chapter 5]{auger} contains the relevant
engineering literature about the geometry of interferences and kernel design.

To be more specific, the (cross-)Wigner distribution of signals $f,g$ in the
Schwartz class $\mathcal{S}(\mathbb{R}^{d})$ is defined by 
\begin{equation}
W(f,g)(x,\omega )=\int_{\mathbb{R}^{d}}e^{-2\pi iy\omega }f(x+\frac{y}{2})\overline{g(x-\frac{y}{2})}\,dy.  \label{wigner}
\end{equation}
The Weyl operator $\limfunc{Op}\nolimits_{\mathrm{W}}(a)$ with symbol $a\in 
\mathcal{S}^{\prime }({\mathbb{R}^{2d}})$ can be defined in terms of the
Wigner distribution by the formula 
\begin{equation*}
\langle \limfunc{Op}\nolimits_{\mathrm{W}}(a)f,g\rangle =\langle
a,W(g,f)\rangle .
\end{equation*}
For $z=(x,\omega )$, consider the Cohen kernel 
\begin{equation}
\Theta (z):=\mathrm{sinc}(x\omega )=
\begin{cases}
\displaystyle\frac{\sin (\pi x\omega )}{\pi x\omega } & \mbox{for}\,  \omega x\neq 0 \\ 
1 & \mbox{for}\, \omega x=0.
\end{cases}
\label{sincxp}
\end{equation}
The (cross-)Born-Jordan distribution $Q(f,g)$ is then defined by 
\begin{equation}
Q(f,g)=W(f,g)\ast \Theta _{\sigma },\quad f,g\in \mathcal{S}(\mathbb{R}^{d}),
\label{BJ}
\end{equation}%
where $\Theta _{\sigma }$ is the symplectic Fourier transform of $\Theta$ recalled in \eqref{SFT} below. Likewise the Weyl operator, a Born-Jordan operator with
symbol $a\in \mathcal{S}^{\prime }({\mathbb{R}^{2d}})$ can be defined as 
\begin{equation}
\langle \limfunc{Op}\nolimits_{\mathrm{BJ}}(a)f,g\rangle =\langle
a,Q(g,f)\rangle \quad f,g\in \mathcal{S}(\mathbb{R}^{d}).  \label{BJOP}
\end{equation}%
Any pseudodifferential operator admits a representation in the Born-Jordan
form $\limfunc{Op}\nolimits_{\mathrm{BJ}}(a)$, as stated in \cite{cgn01}. \par
Now, a first relevant feature of this work is to have computed the Cohen kernel $\Theta
_{\sigma }$ explicitly (cf. the subsequent Proposition \ref{Thetasigma}). Namely \begin{equation*}
\Theta _{\sigma }(\zeta _{1},\zeta _{2})=
\begin{cases}
-2\,\mathrm{Ci}(4\pi |\zeta _{1}\zeta _{2}|),\quad (\zeta _{1},\zeta
_{2})\in \mathbb{R}^{2},\,d=1 \\ 
\mathcal{F}(\chi _{\{|s|\geq 2\}}|s|^{d-2})(\zeta _{1},\zeta _{2}),\quad
(\zeta _{1}\zeta _{2})\in {\mathbb{R}^{2d}},\,d\geq 2,%
\end{cases}
\end{equation*}
where $\chi _{\{|s|\geq 2\}}$ is the characteristic function of the set $\{s\in \mathbb{R}\,:\,|s|\geq 2\}$ and where
\begin{equation*}  \label{cosint}
\mathrm{Ci}(t)=-\int_t^{+\infty} \frac{\cos s}s\, ds,\quad t\in\mathbb{R}.
\end{equation*}
is the cosine integral function. \par
This expression of $\Theta _{\sigma }$ shows that this kernel behaves badly
in general: it does not even belong to $L_{loc}^{\infty }$ (see Corollary %
\ref{Thetalinfty}) and has no decay at infinity (see Corollary \ref{bo2}).
In spite of these facts, it was proved in \cite{cgn2} that some directional smoothing effect is still present, but the analysis carried on there also shows the needed of a systematic and general study of the boundedness of
such operators $\limfunc{Op}\nolimits_{\mathrm{BJ}}(a)$ on modulation spaces, in dependence of the Born-Jordan symbol space. Modulation spaces, introduced by Feichtinger in \cite{feichtinger83}, have been widely employed in the literature to investigate properties of pseudodifferential operators, in particular we highlight the contributions \cite{benyi2,A1,CTW2014,GH99,labate2,Sjostrand1,sjostrand95,sugitomita2,A7,toft1,Toftweight}.    For their definition and main properties  we refer  to the successive section.

The main result concerning the sufficient boundedness conditions of
Born-Jordan operators on modulation spaces shows that they behave similarly
to Weyl pseudodifferential operators or any other $\tau $-form of
pseudodifferential operators. For comparison, see \cite[Theorem 5.2,
Proposition 5.3]{cordero2}, \cite[Theorem 1.1]{cordero3} and \cite[Theorem
4.3]{toft1}. The necessary boundedness conditions still contain some open
problems, as shown in the following result. We denote $q^{\prime }$ the
conjugate exponent of $q\in \lbrack 1,\infty ]$; it is defined by $%
1/q+1/q^{\prime }=1$.

\begin{theorem}
\label{Charpseudo} Consider $1\leq p,q,r_1,r_2\leq \infty$, such that 
\begin{equation}  \label{indicitutti}
p\leq q^{\prime }
\end{equation}
\noindent and 
\begin{equation}  \label{indiceq}
\quad q \leq \min \{r_1,r_2,r_1^{\prime },r_2^{\prime }\}.
\end{equation}
Then the Born-Jordan operator $\limfunc{Op}\nolimits_{\mathrm{BJ}}(a)$, from 
$\mathcal{S}(\mathbb{R}^d)$ to $\mathcal{S}^{\prime }(\mathbb{R}^d)$, having
symbol $a \in M^{p,q}(\mathbb{R}^{2d})$, extends uniquely to a bounded
operator on $\mathcal{M}^{r_1,r_2}(\mathbb{R}^d)$, with the estimate 
\begin{equation}  \label{stimaA}
\|\limfunc{Op}\nolimits_{\mathrm{BJ}}(a) f\|_{\mathcal{M}^{r_1,r_2}}
\lesssim \|a\|_{M^{p,q}}\|f\|_{\mathcal{M}^{r_1,r_2}}\quad f\in \mathcal{M}%
^{r_1,r_2}.
\end{equation}
Vice-versa, if this conclusion holds true, the constraints %
\eqref{indicitutti} is satisfied and it must hold 
\begin{equation}  \label{stimanew}
\max \left\{\frac 1{r_1},\frac 1{r_2},\frac 1{r_1^{\prime }},\frac
1{r_2^{\prime }}\right\}\leq \frac 1q+\frac1p
\end{equation}
which is \eqref{indiceq} when $p=\infty$.
\end{theorem}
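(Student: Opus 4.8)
The plan is to reduce both implications to the known time-frequency estimates for the Wigner distribution \eqref{wigner} and the Weyl calculus. The bridge is the identity
$$\operatorname{Op}_{\mathrm{BJ}}(a)=\operatorname{Op}_{\mathrm{W}}(a\ast\Theta_\sigma),\qquad a\in\mathcal{S}'(\mathbb{R}^{2d}),$$
which follows from \eqref{BJOP} and \eqref{BJ} once one observes that, by \eqref{sincxp}, $\Theta$ — hence $\Theta_\sigma$ — is real and even, so that $\langle a,W(g,f)\ast\Theta_\sigma\rangle=\langle a\ast\Theta_\sigma,W(g,f)\rangle$. The new ingredient I would use is the time-frequency localization of the Cohen kernel: although $\Theta_\sigma$ is as singular as in Corollaries \ref{Thetalinfty}--\ref{bo2}, the kernel $\Theta$ itself belongs to $W(\mathcal{F}L^1,L^\infty)(\mathbb{R}^{2d})=M^{\infty,1}(\mathbb{R}^{2d})$. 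Indeed, from $\mathrm{sinc}(t)=\tfrac12\int_{-1}^{1}e^{i\pi ts}\,ds$ one gets $\Theta(x,\omega)=\tfrac12\int_{-1}^{1}e^{i\pi s\,x\cdot\omega}\,ds$, and on a unit cube centred at $(x_0,\omega_0)$ the chirp $e^{i\pi s\,x\cdot\omega}$ is, up to a modulation and a unimodular constant, the fixed function $e^{i\pi s\,u\cdot v}$ ($|s|\le1$, $|u|,|v|\le1$), whose local $\mathcal{F}L^1$-norm is bounded uniformly in $(x_0,\omega_0)$. Since $M^{\infty,1}$ is a pointwise multiplier algebra for every $M^{p,q}$ and $\mathcal{F}_\sigma$ intertwines convolution by $\Theta_\sigma$ with multiplication by $\Theta$, convolution with $\Theta_\sigma$ is bounded on every $M^{p,q}(\mathbb{R}^{2d})$.

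For the sufficiency I would then write, using \eqref{BJOP}, the duality $(M^{p,q})'=M^{p',q'}$ (the cases $p=\infty$ or $q=\infty$ via an $\mathcal{S}$-density/weak-$\ast$ argument) and the previous paragraph,
$$|\langle\operatorname{Op}_{\mathrm{BJ}}(a)f,g\rangle|=|\langle a,Q(g,f)\rangle|\le\|a\|_{M^{p,q}}\,\|W(g,f)\ast\Theta_\sigma\|_{M^{p',q'}}\lesssim\|a\|_{M^{p,q}}\,\|W(g,f)\|_{M^{p',q'}}.$$
Under \eqref{indicitutti}--\eqref{indiceq} one has $\|W(g,f)\|_{M^{p',q'}}\lesssim\|f\|_{\mathcal{M}^{r_1,r_2}}\|g\|_{\mathcal{M}^{r_1',r_2'}}$ — this is precisely the continuity of Weyl operators with $M^{p,q}$ symbols on $\mathcal{M}^{r_1,r_2}$, cf.\ \cite{cordero2,cordero3,toft1} — and taking the supremum over $g$ in the unit ball of $\mathcal{M}^{r_1',r_2'}$ gives \eqref{stimaA}; uniqueness of the extension follows from density of $\mathcal{S}$.

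For the necessity I would observe that, again by duality, \eqref{stimaA} is equivalent to
$$\|Q(g,f)\|_{M^{p',q'}}\lesssim\|f\|_{\mathcal{M}^{r_1,r_2}}\,\|g\|_{\mathcal{M}^{r_1',r_2'}},\qquad f,g\in\mathcal{S}(\mathbb{R}^d),$$
and test this on finite superpositions $f=\sum_k c_k\,\pi(\lambda_k)\varphi_0$, $g=\sum_\ell d_\ell\,\pi(\mu_\ell)\varphi_0$ of time-frequency shifts of a Gaussian $\varphi_0$, with $\lambda_k,\mu_\ell$ on a fine lattice, so that $\|f\|_{\mathcal{M}^{r_1,r_2}}\asymp\|(c_k)\|_{\ell^{r_1,r_2}}$, $\|g\|_{\mathcal{M}^{r_1',r_2'}}\asymp\|(d_\ell)\|_{\ell^{r_1',r_2'}}$ by Gabor frame theory. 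Using the covariance of the Wigner distribution — so that $\mathcal{F}_\sigma W(\pi(\mu_\ell)\varphi_0,\pi(\lambda_k)\varphi_0)$ is a modulated Gaussian localized at $\mu_\ell-\lambda_k$ — together with the geometric fact, visible in \eqref{sincxp}, that $\Theta\equiv1$ on the coordinate subspaces $\{(\zeta_1,\zeta_2):\zeta_1\zeta_2=0\}$, I would place the lattice points in a thin neighbourhood of such a subspace, where $\Theta$ can be inverted by a fixed element of $M^{\infty,1}$ (a suitably cut-off version of $1/\mathrm{sinc}$); then $\mathcal{F}_\sigma Q(g,f)$ agrees with $\mathcal{F}_\sigma W(g,f)$ up to a fixed bounded multiplier. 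This transfers the classical lower bounds for the Wigner distribution to $Q$ and reduces the estimate to a family of scalar inequalities among $p,q,r_1,r_2$, which turn out to be exactly \eqref{indicitutti} and \eqref{stimanew}.

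I expect the sufficiency to be routine modulo the Wiener amalgam bookkeeping for $\Theta$ and the endpoint exponents. The main obstacle is the necessity: on one hand the Wigner transform of a product of time-frequency shifts carries the chirp phases $e^{i\pi\sigma(\lambda_k,\mu_\ell)}$, which must be neutralized (for instance by supporting $(c_k),(d_\ell)$ on a single line, or by randomizing signs); on the other hand — and this is the delicate point — $\Theta$ equals $1$ only on a null set, so the test functions have to be strongly concentrated near the axes and one needs quantitative control of $\Theta-1$ on the narrow support of the relevant ambiguity functions. It is exactly this loss that accounts for the gap between the sufficient condition \eqref{indiceq} and the necessary condition \eqref{stimanew}.
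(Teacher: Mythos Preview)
Your sufficiency argument is the paper's: prove $\Theta\in W(\mathcal{F}L^1,L^\infty)$ (the paper does this by computing $V_g\Theta$ explicitly, Propositions~\ref{sincSTFT}--\ref{sincxpwiener}, rather than your chirp-on-a-cube heuristic), hence $\Theta_\sigma\in M^{1,\infty}$, so $a\mapsto a\ast\Theta_\sigma$ is bounded on every $M^{p,q}$ by the convolution relation $M^{p,q}\ast M^{1,\infty}\hookrightarrow M^{p,q}$; then invoke the Weyl boundedness from \cite{cordero2}. One slip: $W(\mathcal{F}L^1,L^\infty)=\mathcal{F}(M^{1,\infty})$, \emph{not} $M^{\infty,1}$, and it is the former that is the pointwise multiplier algebra for the $W(\mathcal{F}L^p,L^q)$ scale your Fourier-side argument needs; this does not affect the conclusion.

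The necessity, however, is only a plan and diverges from the paper. The paper does \emph{not} use Gabor atoms: it tests the dual estimate $\|Q(g,f)\|_{M^{p',q'}}\lesssim\|f\|_{M^{r_1,r_2}}\|g\|_{M^{r_1',r_2'}}$ on \emph{rescaled Gaussians} $\varphi_\lambda(x)=e^{-\pi\lambda|x|^2}$, for which $\mathcal{F}_\sigma W(\varphi,\varphi_\lambda)$ is a single explicit Gaussian (formula \eqref{cfwfflsig}), and combines the exact asymptotics of $\|\varphi_\lambda\|_{M^{r,s}}$ (Lemma~\ref{lemma5.2-zero}) with cut-offs $\chi(\zeta_1\zeta_2)$ and $\psi(\sqrt\lambda\,\zeta_1)\psi(\sqrt\lambda^{-1}\zeta_2)$ in $W(\mathcal{F}L^1,L^\infty)$ (Lemmas~\ref{lemma5.1}, \ref{lemma5.2}) to invert $\Theta$ locally and obtain a power-of-$\lambda$ lower bound for $\|Q\|_{M^{p',q'}}$. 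The choice $f=g=\varphi_\lambda$, $\lambda\to+\infty$ gives \eqref{indicitutti}; the choice $f=\varphi$, $g=\varphi_\lambda$, $\lambda\to0^+$ gives $1/r_1\le1/p+1/q$; an adjoint lemma ($\limfunc{Op}\nolimits_{\mathrm{BJ}}(a)^\ast=\limfunc{Op}\nolimits_{\mathrm{BJ}}(\bar a)$) and a conjugation by $\mathcal{F}$ (using $\Theta_\sigma\circ J=\Theta_\sigma$) supply the remaining three inequalities of \eqref{stimanew}. Your lattice approach, by contrast, produces a double sum of cross-Wigner terms centred at all $\tfrac12(\mu_\ell+\lambda_k)$ and carrying phases $e^{i\pi\sigma(\mu_\ell,\lambda_k)}$; you have not exhibited a choice of $(c_k),(d_\ell),(\lambda_k),(\mu_\ell)$ that simultaneously keeps $\mathcal{F}_\sigma W(g,f)$ inside the region where $\Theta$ is invertible, controls the interference, and makes the norms scale so as to isolate each of the four inequalities in \eqref{stimanew}. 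Without an explicit one-parameter family and an actual exponent comparison, the assertion that the output ``turns out to be exactly \eqref{indicitutti} and \eqref{stimanew}'' is not yet a proof.
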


Notice that the condition \eqref{stimanew} is weaker than \eqref{indiceq}
when $p<\infty$. The condition \eqref{stimanew} is obtained by working with
rescaled Gaussians which provide the best localization in terms of Wigner
distribution (cf. \cite{Lieb}). On the Fourier side, the Born-Jordan distribution is the
point-wise multiplication of the Wigner distribution with the kernel $\Theta$%
. This reasoning conduces to conjecture that the condition \eqref{stimanew}
should be the optimal one so that the sufficient boundedness conditions for
Born-Jordan operators might be weaker than the corresponding ones for Weyl
and $\tau$-pseudodifferential operators. But the matter is really subtle and
requires a new and most refined analysis of the kernel $\Theta$. In particular the zeroes of the $\Theta $ function should play a key for a thorough understanding of such operators, which certainly deserve further study.\par\medskip
The paper is organized as follows. Section 2 is devoted to some preliminary results from Time-frequency Analysis. In Section 3 we perform an analysis of the kernel $\Theta$ and we prove the above formula for $\Theta_{\sigma}$. In Sections 4 and 5 we study the Cohen kernels and the boundedness of Born-Jordan operators in the framework of modulation spaces.


\section{Preliminaries}

In this section we recall the definition of the spaces involved in our study
and present the main time-frequency tools used.

\medskip \noindent \textbf{Modulation and Wiener amalgam spaces.} The
modulation and Wiener amalgam space norms are a measure of the joint
time-frequency distribution of $f\in \mathcal{S} ^{\prime }$. For their
basic properties we refer to the original literature \cite%
{feichtinger80,feichtinger83,feichtinger90} and the textbooks \cite%
{Birkbis,book}.

Let $f\in\mathcal{S}^{\prime }(\mathbb{R}^d)$. We define the short-time
Fourier transform of $f$ as 
\begin{equation}  \label{STFTdef}
V_gf(z)=\langle f,\pi(z)g\rangle=\mathcal{F} [fT_x g](\omega)=\int_{\mathbb{R%
}^d} f(y)\, {\overline {g(y-x)}} \, e^{-2\pi iy \omega }\,dy
\end{equation}
for $z=(x,\omega)\in\mathbb{R}^d\times\mathbb{R}^d$.


Given a non-zero window $g\in\mathcal{S}(\mathbb{R}^d)$, $1\leq p,q\leq
\infty$, the \textit{\ modulation space} $M^{p,q}(\mathbb{R}^d)$ consists of
all tempered distributions $f\in\mathcal{S}^{\prime }(\mathbb{R}^d)$ such
that $V_gf\in L^{p,q}(\mathbb{R}^{2d} )$ (weighted mixed-norm spaces). The
norm on $M^{p,q}$ is 
\begin{equation*}
\|f\|_{M^{p,q}}=\|V_gf\|_{L^{p,q}}=\left(\int_{\mathbb{R}^d} \left(\int_{%
\mathbb{R}^d}|V_gf(x,\omega)|^p(x,\omega)^p\,
dx\right)^{q/p}d\omega\right)^{1/p} \,
\end{equation*}
(with natural modifications when $p=\infty$ or $q=\infty$). If $p=q$, we
write $M^p$ instead of $M^{p,p}$.

The space $M^{p,q}(\mathbb{R}^d)$ is a Banach space whose definition is
independent of the choice of the window $g$, in the sense that different
nonzero window functions yield equivalent norms. The modulation space $%
M^{\infty,1}$ is also called Sj\"ostrand's class \cite{Sjostrand1}.

The closure of $\mathcal{S}(\mathbb{R}^d)$ in the $M^{p,q}$-norm is denoted $%
\mathcal{M}^{p,q}(\mathbb{R}^d)$. Then 
\begin{equation*}
\mathcal{M}^{p,q}(\mathbb{R}^d) \subseteq M^{p,q}(\mathbb{R}^d), \quad {%
\mbox and }\,\, \mathcal{M}^{p,q} (\mathbb{R}^d) = M^{p,q} (\mathbb{R}^d),
\end{equation*}
provided $p<\infty$ and $q<\infty$.

Recalling that the conjugate exponent $p^{\prime }$ of $p\in \lbrack
1,\infty ]$ is defined by $1/p+1/p^{\prime }=1$, for any $p,q\in \lbrack
1,\infty ]$ the inner product $\langle \cdot ,\cdot \rangle $ on $\mathcal{S}%
(\mathbb{R}^{d})\times \mathcal{S}(\mathbb{R}^{d})$ extends to a continuous
sesquilinear map $M^{p,q}(\mathbb{R}^{d})\times M^{p^{\prime },q^{\prime }}(%
\mathbb{R}^{d})\rightarrow \mathbb{C}$.

Modulation spaces enjoy the following inclusion properties: 
\begin{equation}
\mathcal{S}(\mathbb{R}^{d})\subseteq M^{p_{1},q_{1}}(\mathbb{R}%
^{d})\subseteq M^{p_{2},q_{2}}(\mathbb{R}^{d})\subseteq \mathcal{S}^{\prime
}(\mathbb{R}^{d}),\quad p_{1}\leq p_{2},\,\,q_{1}\leq q_{2}.
\label{modspaceincl1}
\end{equation}%
The Wiener amalgam spaces $W(\mathcal{F}L^{p},L^{q})(\mathbb{R}^{d})$ are
given by the distributions $f\in \mathcal{S}^{\prime }(\mathbb{R}^{d})$ such
that 
\begin{equation*}
\Vert f\Vert _{W(\mathcal{F}L^{p},L^{q})(\mathbb{R}^{d})}:=\left( \int_{%
\mathbb{R}^{d}}\left( \int_{\mathbb{R}^{d}}|V_{g}f(x,\omega )|^{p}\,d\omega
\right) ^{q/p}dx\right) ^{1/q}<\infty \,
\end{equation*}%
(with obvious changes for $p=\infty $ or $q=\infty $). Using Parseval
identity in \eqref{STFTdef}, we can write the so-called fundamental identity
of time-frequency analysis\thinspace\ $V_{g}f(x,\omega )=e^{-2\pi ix\omega
}V_{\hat{g}}\hat{f}(\omega ,-x)$, hence 
\begin{equation*}
|V_{g}f(x,\omega )|=|V_{\hat{g}}\hat{f}(\omega ,-x)|=|\mathcal{F}(\hat{f}%
\,T_{\omega }\overline{\hat{g}})(-x)|
\end{equation*}%
so that 
\begin{equation*}
\Vert f\Vert _{{M}^{p,q}}=\left( \int_{\mathbb{R}^{d}}\Vert \hat{f}\
T_{\omega }\overline{\hat{g}}\Vert _{\mathcal{F}L^{p}}^{q}(\omega )\ d\omega
\right) ^{1/q}=\Vert \hat{f}\Vert _{W(\mathcal{F}L^{p},L^{q})}.
\end{equation*}
This means that these Wiener amalgam spaces are simply the image under
Fourier transform\thinspace\ of modulation spaces: 
\begin{equation}
\mathcal{F}({M}^{p,q})=W(\mathcal{F}L^{p},L^{q}).  \label{W-M}
\end{equation}
We will often use the following product property of Wiener amalgam spaces (%
\cite[Theorem 1 (v)]{feichtinger80}, Theorem 1 (v)): 
\begin{equation}
f\in W(\mathcal{F}L^{1},L^{\infty })\mbox{ and } g\in W(\mathcal{F}
L^{p},L^{q})\Longrightarrow fg\in W(\mathcal{F}L^{p},L^{q}).  \label{product}
\end{equation}
In order to prove the necessary boundedness conditions for Born-Jordan
operators we shall use the dilation properties for Gaussian functions.
Precisely, consider $\varphi (x)=e^{-\pi |x|^{2}}$ and define 
\begin{equation}
\varphi _{\lambda }(x)=\varphi (\sqrt{\lambda }x)=e^{-\pi \lambda
|x|^{2}},\quad \lambda >0.  \label{flam}
\end{equation}%
The dilation properties for the Gaussian $\varphi _{\lambda }$ in modulation
spaces were proved in \cite[Lemma 1.8]{toft1} (see also \cite[Lemma 3.2]%
{CNJFA2008}).

\begin{lemma}
\label{lemma5.2-zero} For $1\leq p,q\leq \infty$, we have 
\begin{equation}  \label{eqa0-zero}
\|\varphi_{\lambda}\|_{M^{p,q}}\asymp \lambda^{-\frac d{2q^{\prime }}}\quad 
\mathrm{as}\ \lambda\to+\infty
\end{equation}
\begin{equation}  \label{eqa1-zero}
\|\varphi_{\lambda}\|_{M^{p,q}}\asymp \lambda^{-\frac d{2p}}\quad \mathrm{as}\
\lambda\to0^+.
\end{equation}
\end{lemma}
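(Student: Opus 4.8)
The plan is to reduce the problem to an explicit computation of the short-time Fourier transform of a Gaussian against a Gaussian window, and then to track the $\lambda$-dependence of the resulting mixed $L^{p,q}$-norm in the two asymptotic regimes. Since the modulation space norm is independent of the choice of nonzero window up to equivalence, I would pick the window $\varphi(x)=e^{-\pi|x|^2}$ itself, so that $V_\varphi \varphi_\lambda$ becomes a Gaussian integral that can be evaluated in closed form: one gets $V_\varphi \varphi_\lambda(x,\omega)$ equal to a dimensional constant times $(\lambda+1)^{-d/2}$ times $\exp\!\big(-\tfrac{\pi\lambda}{\lambda+1}|x|^2\big)\exp\!\big(-\tfrac{\pi}{\lambda+1}|\omega|^2\big)$ times a unimodular factor $e^{-2\pi i \frac{\lambda}{\lambda+1} x\omega}$ (or similar, depending on normalization conventions). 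The modulus is therefore a product of a Gaussian in $x$ with variance parameter $a(\lambda):=\lambda/(\lambda+1)$ and a Gaussian in $\omega$ with variance parameter $b(\lambda):=1/(\lambda+1)$, scaled by $(\lambda+1)^{-d/2}$.

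Next I would compute the mixed norm directly. For a function of the form $c\,e^{-\pi a|x|^2}e^{-\pi b|\omega|^2}$ on $\mathbb{R}^d\times\mathbb{R}^d$, the $L^{p,q}$-norm (integrate in $x$ with exponent $p$, then in $\omega$ with exponent $q$) factorizes as $c\cdot\|e^{-\pi a|x|^2}\|_{L^p(\mathbb{R}^d)}\cdot\|e^{-\pi b|\omega|^2}\|_{L^q(\mathbb{R}^d)}$, and each factor is a constant times $a^{-d/(2p)}$, respectively $b^{-d/(2q)}$. Hence
\begin{equation*}
\|\varphi_\lambda\|_{M^{p,q}}\asymp (\lambda+1)^{-d/2}\,\Big(\tfrac{\lambda}{\lambda+1}\Big)^{-\frac{d}{2p}}\,\Big(\tfrac{1}{\lambda+1}\Big)^{-\frac{d}{2q}}=(\lambda+1)^{-\frac d2+\frac d{2p}+\frac d{2q}}\,\lambda^{-\frac d{2p}}.
\end{equation*}
Since $-\tfrac d2+\tfrac d{2p}+\tfrac d{2q}=-\tfrac d2+\tfrac d{2p}+\tfrac d2-\tfrac d{2q'}=\tfrac d{2p}-\tfrac d{2q'}$, this equals $(\lambda+1)^{\frac d{2p}-\frac d{2q'}}\lambda^{-\frac d{2p}}$. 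Now I would examine the two limits. As $\lambda\to 0^+$, $(\lambda+1)^{\,\cdot}\to 1$, so the norm is $\asymp \lambda^{-d/(2p)}$, giving \eqref{eqa1-zero}. As $\lambda\to+\infty$, $(\lambda+1)^{\frac d{2p}-\frac d{2q'}}\asymp \lambda^{\frac d{2p}-\frac d{2q'}}$, and multiplying by $\lambda^{-d/(2p)}$ leaves $\lambda^{-d/(2q')}$, giving \eqref{eqa0-zero}.

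The routine but slightly delicate point is the closed-form evaluation of $V_\varphi\varphi_\lambda$ — completing the square in the Gaussian integral $\int_{\mathbb{R}^d} e^{-\pi\lambda|y|^2}e^{-\pi|y-x|^2}e^{-2\pi i y\omega}\,dy$ and correctly identifying the two Gaussian variance parameters and the overall prefactor — since a sign or normalization slip there would corrupt the exponents. The only genuinely substantive step is recognizing that all the $\lambda$-dependence is captured by these three elementary factors; once that is in hand, the asymptotics are immediate, and no separate upper/lower bound argument is needed because the window-independence of the norm together with the exact Gaussian computation already yields two-sided estimates. As noted, this is exactly \cite[Lemma 1.8]{toft1} (see also \cite[Lemma 3.2]{CNJFA2008}), so alternatively one may simply cite those references; we record the short argument above for completeness.
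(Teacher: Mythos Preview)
Your proposal is correct; the paper itself does not prove this lemma but simply cites \cite[Lemma 1.8]{toft1} and \cite[Lemma 3.2]{CNJFA2008}, exactly as you acknowledge at the end. Your direct Gaussian computation of $|V_\varphi\varphi_\lambda|$ and the subsequent tracking of the $\lambda$-dependence in the mixed $L^{p,q}$-norm is the standard route and is carried out correctly.
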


The following dilation properties are a straightforward generalization of \cite[Lemma 2.3]{cgn2}.

\begin{lemma}
\label{lemma5.2} Consider $1\leq p,q\leq \infty$, $\psi\in C^\infty_c(%
\mathbb{R}^d)\setminus\{0\}$ and $\lambda>0$. Then 
\begin{equation}  \label{eqa0}
\|\psi(\sqrt{\lambda}\,\cdot)\|_{W(\mathcal{F} L^p,L^q)}\asymp
\lambda^{-\frac d{2p^{\prime }}}\quad \mathrm{as}\ \lambda\to+\infty
\end{equation}
\begin{equation}  \label{eqa1}
\|\psi(\sqrt{\lambda}\,\cdot)\|_{W(\mathcal{F} L^p,L^q)}\asymp
\lambda^{-\frac d{2q}}\quad \mathrm{as}\ \lambda\to0^+.
\end{equation}
The same conclusion holds uniformly with respect to $\lambda$ if $\psi$ varies in bounded subsets of $C^\infty_c(\mathbb{R}^d)$.
\end{lemma}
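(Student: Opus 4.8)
The plan is to compute the amalgam norm directly from its window form. Unwinding the definition of $W(\mathcal{F}L^{p},L^{q})$ (using $\int|V_{g}f(x,\omega)|^{p}\,d\omega=\|f\,\overline{T_xg}\|_{\mathcal{F}L^{p}}^{p}$), one has $\|f\|_{W(\mathcal{F}L^{p},L^{q})}=\big(\int_{\mathbb{R}^d}\|f\,T_x\phi\|_{\mathcal{F}L^{p}}^{q}\,dx\big)^{1/q}$ for a nonzero real window $\phi$, and since this norm is independent of the window up to equivalence (the window-change property from \cite{feichtinger80}), I would fix once and for all $\phi\in C^\infty_c(\mathbb{R}^d)$ with $\phi\equiv1$ on $\overline{B(0,1)}$ and $\supp\phi\subset B(0,2)$, and fix $R>0$ with $\supp\psi\subset B(0,R)$; write $\psi_\lambda=\psi(\sqrt\lambda\,\cdot)$. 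The proof then rests on two elementary facts: the dilation identity $\|\psi_\lambda\|_{\mathcal{F}L^{p}}=\lambda^{-d/(2p')}\|\widehat\psi\|_{L^{p}}$ (immediate from $\widehat{\psi_\lambda}=\lambda^{-d/2}\widehat\psi(\cdot/\sqrt\lambda)$), and the crude bound $\|h\|_{\mathcal{F}L^{p}}\le C_{d,p}\sum_{|\alpha|\le N}\|\partial^\alpha h\|_{L^{\infty}}$, valid with $N=N(d,p)$ for every $h$ supported in a ball of radius $2$ (split $\widehat h$ over $|\xi|\le1$, where $|\widehat h|\le\|h\|_{L^1}$, and over $|\xi|>1$, where $|\widehat h(\xi)|\lesssim|\xi|^{-N}\max_{|\alpha|=N}\|\partial^\alpha h\|_{L^1}$).

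The regime $\lambda\to+\infty$ is quick. Once $\lambda\ge4R^2$ we have $\supp\psi_\lambda\subset B(0,1/2)$, so $\psi_\lambda\,T_x\phi=0$ unless $|x|\le3$; for the upper bound Young's inequality gives $\|\psi_\lambda\,T_x\phi\|_{\mathcal{F}L^{p}}\le\|\psi_\lambda\|_{\mathcal{F}L^{p}}\,\|\widehat{T_x\phi}\|_{L^{1}}=\lambda^{-d/(2p')}\|\widehat\psi\|_{L^{p}}\|\widehat\phi\|_{L^{1}}$, and integrating over $|x|\le3$ yields $\|\psi_\lambda\|_{W(\mathcal{F}L^{p},L^{q})}\lesssim\lambda^{-d/(2p')}$. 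For the lower bound, when $|x|\le1/2$ we have $T_x\phi\equiv1$ on $B(0,1/2)\supseteq\supp\psi_\lambda$, hence $\psi_\lambda\,T_x\phi=\psi_\lambda$ and $\|\psi_\lambda\,T_x\phi\|_{\mathcal{F}L^{p}}=\lambda^{-d/(2p')}\|\widehat\psi\|_{L^{p}}$; integrating over $|x|\le1/2$ gives the matching lower bound, which is \eqref{eqa0}.

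For $\lambda\to0^{+}$ the point is that $\psi_\lambda$ is nearly constant on the support of each $T_x\phi$: I would write $\psi_\lambda\,T_x\phi=\psi(\sqrt\lambda x)\,T_x\phi+R_{x,\lambda}$ with $R_{x,\lambda}(y)=\big(\psi(\sqrt\lambda y)-\psi(\sqrt\lambda x)\big)\phi(y-x)$, supported in $B(x,2)$. Combining the mean value theorem, the identity $\partial_y^\beta[\psi(\sqrt\lambda y)]=\lambda^{|\beta|/2}(\partial^\beta\psi)(\sqrt\lambda y)$ and the crude bound above, one gets $\|R_{x,\lambda}\|_{\mathcal{F}L^{p}}\le C\sqrt\lambda$ for $0<\lambda\le1$, uniformly in $x$, with $C$ depending only on $\phi$ and on finitely many $\|\partial^\alpha\psi\|_{L^{\infty}}$. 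Since $\|\psi(\sqrt\lambda x)\,T_x\phi\|_{\mathcal{F}L^{p}}=|\psi(\sqrt\lambda x)|\,\|\widehat\phi\|_{L^{p}}$ and $\psi_\lambda\,T_x\phi=0$ for $|x|>R\lambda^{-1/2}+2$, raising $\|\psi_\lambda\,T_x\phi\|_{\mathcal{F}L^{p}}\le|\psi(\sqrt\lambda x)|\|\widehat\phi\|_{L^{p}}+C\sqrt\lambda$ to the $q$-th power, integrating, and substituting $u=\sqrt\lambda x$ produces a main term $\|\widehat\phi\|_{L^{p}}^{q}\,\|\psi\|_{L^{q}}^{q}\,\lambda^{-d/2}$ and a remainder $O(\lambda^{(q-d)/2})=o(\lambda^{-d/2})$; hence $\|\psi_\lambda\|_{W(\mathcal{F}L^{p},L^{q})}\lesssim\lambda^{-d/(2q)}$. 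For the lower bound one picks a ball $B(x_0,\rho)$ with $|\psi|\ge\delta>0$ on it; for $x\in B(\lambda^{-1/2}x_0,\lambda^{-1/2}\rho)$ and $\lambda$ so small that $C\sqrt\lambda\le\tfrac12\delta\|\widehat\phi\|_{L^{p}}$, the reverse triangle inequality gives $\|\psi_\lambda\,T_x\phi\|_{\mathcal{F}L^{p}}\ge\tfrac12\delta\|\widehat\phi\|_{L^{p}}$, and integrating over that ball (of volume $\asymp\rho^{d}\lambda^{-d/2}$) gives $\|\psi_\lambda\|_{W(\mathcal{F}L^{p},L^{q})}\gtrsim\lambda^{-d/(2q)}$, i.e. \eqref{eqa1}. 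Every constant above depends on $\psi$ only through $R$, finitely many $\|\partial^\alpha\psi\|_{L^{\infty}}$, the numbers $\|\widehat\psi\|_{L^{p}}$ and $\|\psi\|_{L^{q}}$, and a pair $(\delta,\rho)$ as above, so on a bounded family in $C^\infty_c(\mathbb{R}^d)$ bounded away from $0$ these equivalences hold uniformly. I expect the only genuinely delicate point to be this small-$\lambda$ regime --- the uniform ``local flatness'' bound on $R_{x,\lambda}$ and making sure the $O(\sqrt\lambda)$ remainder does not absorb the lower bound --- whereas the large-$\lambda$ case and the uniformity bookkeeping are routine.
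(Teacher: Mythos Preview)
The paper does not actually prove this lemma: it is stated with the one-line justification that it ``is a straightforward generalization of \cite[Lemma 2.3]{cgn2}'' and no argument is given. Your proposal, by contrast, supplies a complete self-contained proof via the windowed description $\|f\|_{W(\mathcal{F}L^{p},L^{q})}=\big(\int\|f\,T_x\phi\|_{\mathcal{F}L^{p}}^{q}\,dx\big)^{1/q}$, treating the large-$\lambda$ regime by the exact dilation identity for $\|\psi_\lambda\|_{\mathcal{F}L^{p}}$ together with the trivial compact-support localization, and the small-$\lambda$ regime by the ``local flatness'' decomposition $\psi_\lambda T_x\phi=\psi(\sqrt\lambda x)T_x\phi+R_{x,\lambda}$ with a uniform $O(\sqrt\lambda)$ control on $\|R_{x,\lambda}\|_{\mathcal{F}L^{p}}$. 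The argument is correct; the exponent bookkeeping checks out in both regimes, the remainder bound is valid because the $\mathcal{F}L^{p}$-norm is translation invariant (so your ``crude bound'' applies to functions supported in $B(x,2)$), and the cases $p=\infty$ or $q=\infty$ go through with the obvious modifications.

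One remark on the uniformity clause: you correctly note that the \emph{lower} bounds need, in addition to boundedness of the family in $C^\infty_c$, some non-degeneracy (a common pair $(\delta,\rho)$, equivalently a uniform lower bound on $\|\widehat\psi\|_{L^{p}}$ and $\|\psi\|_{L^{q}}$). The paper's statement is silent on this point; in its applications (e.g.\ in the proof of Theorem \ref{1-2}) the families that arise are indeed uniformly nontrivial, so no harm is done, but your added hypothesis ``bounded away from $0$'' is the honest formulation if one wants two-sided uniform equivalences.
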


Another tool for obtaining the optimality of our results is the cross-Wigner
distribution of rescaled Gaussian functions. The proof is a straightforward
computation (see Prop. 244 in \cite{Birkbis}):

\begin{lemma}
\label{l1} Consider $\varphi (x)=e^{-\pi |x|^{2}}$ and $\varphi _{\lambda }$
as in \eqref{flam}. Then 
\begin{equation}
W(\varphi ,\varphi _{\lambda })(x,\omega )=\frac{2^{d}}{(\lambda +1)^{\frac{d%
}{2}}}e^{-\frac{4\pi \lambda }{\lambda +1}|x|^{2}}e^{-\frac{4\pi }{\lambda +1%
}|\omega |^{2}}e^{-4\pi i\frac{\lambda -1}{\lambda +1}x\omega }.
\label{wign1l}
\end{equation}
\end{lemma}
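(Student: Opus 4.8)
The plan is to compute $W(\varphi,\varphi_\lambda)$ directly from the defining integral \eqref{wigner}. Since $\varphi_\lambda(x)=e^{-\pi\lambda|x|^2}$ is real-valued, the conjugation in \eqref{wigner} is harmless and one gets
\begin{equation*}
W(\varphi,\varphi_\lambda)(x,\omega)=\int_{\mathbb{R}^d}e^{-2\pi iy\omega}\,e^{-\pi|x+\frac y2|^2}\,e^{-\pi\lambda|x-\frac y2|^2}\,dy .
\end{equation*}
Expanding $|x\pm\tfrac y2|^2=|x|^2\pm x\cdot y+\tfrac14|y|^2$ and collecting the terms by their degree in $y$, the integrand factors as $e^{-\pi(1+\lambda)|x|^2}$ times $\exp\bigl(-\pi\tfrac{1+\lambda}{4}|y|^2+b\cdot y\bigr)$ with $b=-\pi(1-\lambda)x-2\pi i\omega\in\mathbb{C}^d$.

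Next I would invoke the elementary Gaussian integral identity $\int_{\mathbb{R}^d}e^{-\pi a|y|^2+b\cdot y}\,dy=a^{-d/2}e^{|b|^2/(4\pi a)}$, valid for every $a>0$ and every $b\in\mathbb{C}^d$, where $|b|^2:=\sum_j b_j^2$; this is the analytic continuation of the real-variable formula obtained by completing the square and shifting the contour of integration, and the hypothesis $\lambda>0$ guarantees $a=\tfrac{1+\lambda}{4}>0$, so that the square-completion is legitimate. With this $a$ and $b$ the prefactor is $(\tfrac{1+\lambda}{4})^{-d/2}=2^d(1+\lambda)^{-d/2}$, which already produces the constant in \eqref{wign1l}.

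It then remains only to simplify the exponent. A short computation gives $|b|^2=\pi^2(1-\lambda)^2|x|^2+4\pi^2 i(1-\lambda)\,x\cdot\omega-4\pi^2|\omega|^2$, hence $|b|^2/(4\pi a)=|b|^2/(\pi(1+\lambda))$ contributes $\tfrac{\pi(1-\lambda)^2}{1+\lambda}|x|^2+\tfrac{4\pi i(1-\lambda)}{1+\lambda}\,x\cdot\omega-\tfrac{4\pi}{1+\lambda}|\omega|^2$. Adding the factor $e^{-\pi(1+\lambda)|x|^2}$ split off earlier, the coefficient of $|x|^2$ becomes $-\pi(1+\lambda)+\tfrac{\pi(1-\lambda)^2}{1+\lambda}=-\tfrac{4\pi\lambda}{1+\lambda}$, while the $|\omega|^2$ and $x\cdot\omega$ coefficients are already $-\tfrac{4\pi}{1+\lambda}$ and $-4\pi i\tfrac{\lambda-1}{\lambda+1}$, respectively; this is exactly \eqref{wign1l}. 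There is no genuine obstacle in this argument: the only two points deserving a line of care are the justification of the complex Gaussian integral (guaranteed by $\operatorname{Re}a>0$) and the sign bookkeeping in $|b|^2$ — one must remember that the relevant quantity is $\sum_j b_j^2$ and \emph{not} $\sum_j|b_j|^2$, which is what makes the phase factor $e^{-4\pi i\frac{\lambda-1}{\lambda+1}x\omega}$ appear.
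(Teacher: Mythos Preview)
Your computation is correct and is precisely the ``straightforward computation'' the paper alludes to; the paper itself does not spell out any argument but simply refers to Prop.~244 in \cite{Birkbis}. Your direct evaluation of the Gaussian integral, with the completed-square identity $\int_{\mathbb{R}^d}e^{-\pi a|y|^2+b\cdot y}\,dy=a^{-d/2}e^{\sum_j b_j^2/(4\pi a)}$ for $a>0$ and $b\in\mathbb{C}^d$, is exactly what that reference does, and your bookkeeping of the coefficients is accurate.
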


It follows that:

\begin{cor}
Consider $\varphi $ and $\varphi _{\lambda }$ as in the assumptions of Lemma %
\ref{l1}. Then 
\begin{equation}
\mathcal{F}W(\varphi ,\varphi _{\lambda })(\zeta _{1},\zeta _{2})=\frac{1}{%
(\lambda +1)^{\frac{d}{2}}}e^{-\frac{\pi  }{\lambda +1}\zeta _{1}^{2}}e^{-%
\frac{\pi \lambda }{\lambda +1}\zeta _{2}^{2}}e^{-\pi i\frac{\lambda -1}{\lambda +1}%
\zeta _{1}\zeta _{2}}.  \label{cfwffl}
\end{equation}
\end{cor}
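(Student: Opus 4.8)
The plan is to deduce \eqref{cfwffl} directly from \eqref{wign1l} by applying the (ordinary, $2d$-dimensional) Fourier transform to the explicit Gaussian-times-chirp expression for $W(\varphi,\varphi_\lambda)$. Writing $z=(x,\omega)\in\rdd$, formula \eqref{wign1l} exhibits $W(\varphi,\varphi_\lambda)(z)$ as a constant multiple of $\exp(-\pi z^\top A z)$ for a symmetric complex $2d\times 2d$ matrix $A$ with positive definite real part: the real part contributes the two real Gaussian factors $e^{-\frac{4\pi\lambda}{\lambda+1}|x|^2}e^{-\frac{4\pi}{\lambda+1}|\omega|^2}$, and the imaginary part contributes the chirp $e^{-4\pi i\frac{\lambda-1}{\lambda+1}x\omega}$. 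Concretely, in the scalar case $d=1$ one has
\[
A=\begin{pmatrix}\dfrac{4\lambda}{\lambda+1} & 2i\dfrac{\lambda-1}{\lambda+1}\\[2mm] 2i\dfrac{\lambda-1}{\lambda+1} & \dfrac{4}{\lambda+1}\end{pmatrix},
\]
and the general $d$ case is obtained by tensoring this $2\times 2$ block $d$ times, so it suffices to treat $d=1$ and then take $d$-th powers of the resulting one-variable formula.

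The main computational tool is the standard Gaussian integral formula: for a symmetric matrix $A$ with $\mathrm{Re}\,A>0$,
\[
\cF\big(e^{-\pi z^\top A z}\big)(\zeta)=(\det A)^{-1/2}\, e^{-\pi \zeta^\top A^{-1}\zeta},
\]
with the branch of the square root determined by analytic continuation from positive definite $A$. So the steps are: first compute $\det A$; in the $d=1$ case a direct $2\times2$ determinant gives $\det A = \dfrac{16\lambda}{(\lambda+1)^2}+\dfrac{4(\lambda-1)^2}{(\lambda+1)^2}=\dfrac{4(\lambda+1)^2}{(\lambda+1)^2}=4$, so $(\det A)^{-1/2}=\tfrac12$, matching the prefactor $\tfrac{2^d}{(\lambda+1)^{d/2}}$ since the Fourier transform of the constant $\tfrac{2^d}{(\lambda+1)^{d/2}}$-multiple produces $\tfrac{2^d}{(\lambda+1)^{d/2}}\cdot 2^{-d}=(\lambda+1)^{-d/2}$. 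Second, compute $A^{-1}$: using $\det A=4$ one finds
\[
A^{-1}=\frac14\begin{pmatrix}\dfrac{4}{\lambda+1} & -2i\dfrac{\lambda-1}{\lambda+1}\\[2mm] -2i\dfrac{\lambda-1}{\lambda+1} & \dfrac{4\lambda}{\lambda+1}\end{pmatrix}=\begin{pmatrix}\dfrac{1}{\lambda+1} & -\dfrac{i(\lambda-1)}{2(\lambda+1)}\\[2mm] -\dfrac{i(\lambda-1)}{2(\lambda+1)} & \dfrac{\lambda}{\lambda+1}\end{pmatrix},
\]
so that $\zeta^\top A^{-1}\zeta=\dfrac{\zeta_1^2}{\lambda+1}+\dfrac{\lambda\,\zeta_2^2}{\lambda+1}-\dfrac{i(\lambda-1)}{\lambda+1}\zeta_1\zeta_2$. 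Plugging in yields exactly the right-hand side of \eqref{cfwffl} for $d=1$, and raising everything to the $d$-th power (tensor product of variables) gives the general statement.

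The only genuinely delicate point is bookkeeping of the branch of $(\det A)^{-1/2}$ and, more to the point, checking that the sign of the chirp in the output is $-\pi i\frac{\lambda-1}{\lambda+1}\zeta_1\zeta_2$ rather than its opposite; this is fixed by tracking the factor of $i$ through the Gaussian formula (equivalently, by noting that for $\lambda=1$ both sides reduce to the real Gaussian $e^{-\pi|\zeta_1|^2}e^{-\pi|\zeta_2|^2}$ with no chirp, and that the dependence on $\lambda$ is continuous, so the sign is determined by consistency at, say, $\lambda\to 0^+$). Since $\mathrm{Re}\,A>0$ for every $\lambda>0$, there is no convergence issue: $W(\varphi,\varphi_\lambda)\in\mathcal S(\rdd)$ and the Fourier transform is computed by the elementary formula above. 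Alternatively, and perhaps cleanest for the write-up, one may avoid matrices entirely by separating variables after a rotation in the $(x,\omega)$-plane, but the direct matrix computation is short enough that I would simply record $\det A$ and $A^{-1}$ and conclude.
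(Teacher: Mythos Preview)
Your approach is correct and is precisely the first of the two methods the paper itself invokes: apply the standard complex Gaussian Fourier formula $\mathcal{F}(e^{-\pi z^\top A z})(\zeta)=(\det A)^{-1/2}e^{-\pi\zeta^\top A^{-1}\zeta}$ directly to the explicit expression \eqref{wign1l}. The paper also records a matrix-free alternative via the identity $\mathcal{F}W(\psi,\phi)(x,\omega)=2^{-d}W(\psi,\phi^\vee)(\tfrac12\omega,-\tfrac12 x)$ (here $\varphi_\lambda^\vee=\varphi_\lambda$), but this is merely a shortcut to the same computation you carry out.
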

\begin{proof}
Formula \eqref{cfwffl} is easily obtained from  \eqref{wign1l} using
well-known Gaussian integral formulas; it can also be painlessly obtained
from \eqref{wign1l} by observing that for any functions $\psi ,\phi \in
L^{2}(\mathbb{R}^{d})$ the following relation between the cross-Wigner
distribution and its Fourier transform holds: 
\begin{equation*}
\mathcal{F}W(\psi ,\phi )(x,\o)=2^{-d}W(\psi ,\phi ^{\vee })(\tfrac{1}{2}\o,-%
\tfrac{1}{2}x)
\end{equation*}%
where $\phi ^{\vee }(x)=\phi (-x)$ (see formula (9.27) in \cite{Birkbis}, or
formula (1.90) in Folland \cite{folland}).
\end{proof}



We denote by $\sigma$ the symplectic form on the phase space ${\mathbb{R}%
^{2d}}\equiv\mathbb{R}^d\times \mathbb{R}^d$; the phase space variable is
denoted $z=(x,\omega)$ and the dual variable by $\zeta=(\zeta_1,\zeta_2)$.
By definition $\sigma(z,\zeta)=Jz\cdot \zeta=\omega\cdot \zeta_1-x\cdot
\zeta_2$, where 
\begin{equation}  \label{J}
J=%
\begin{pmatrix}
0_{d\times d} & I_{d\times d} \\ 
-I_{d\times d} & 0_{d\times d}%
\end{pmatrix}%
.
\end{equation}

The Fourier transform of a function $f$ on $\mathbb{R}^{d}$ is normalized as 
\begin{equation*}
\mathcal{F}f(\omega )=\int_{\mathbb{R}^{d}}e^{-2\pi ix\omega }f(x)\,dx,
\end{equation*}%
and the symplectic Fourier transform of a function $F$ on the phase space ${%
\mathbb{R}^{2d}}$ is 
\begin{equation}
\mathcal{F}_{\sigma }F(\zeta )=\int_{{\mathbb{R}^{2d}}}e^{-2\pi {i}\sigma
(\zeta ,z)}F(z)\,dz.  \label{SFT}
\end{equation}%
Observe that $\mathcal{F}_{\sigma }F(\zeta )=\mathcal{F}F(J\zeta )$. Hence
the symplectic Fourier transform of the Wigner distribution \eqref{wign1l}
is given by 
\begin{equation}
\mathcal{F}_{\sigma }W(\varphi ,\varphi _{\lambda })(\zeta _{1},\zeta _{2})=%
\frac{1}{(\lambda +1)^{\frac{d}{2}}}e^{-\frac{\pi \lambda}{\lambda +1}\zeta
_{1}^{2}}e^{-\frac{\pi }{\lambda +1}\zeta _{2}^{2}}e^{\pi i\frac{\lambda -1}{%
\lambda +1}\zeta _{1}\zeta _{2}}.  \label{cfwfflsig}
\end{equation}

We will also use the important relation 
\begin{equation}  \label{eq10}
\mathcal{F}_{\sigma}[F\ast G]=\mathcal{F}_{\sigma} F\, \mathcal{F}_{\sigma} G.
\end{equation}

The convolution relations for modulation spaces are essential in the proof
of the boundedness of a Born-Jordan operator on these spaces and were proved
in \cite[Proposition 2.4]{CG02}:

\begin{proposition}
\label{mconvmp} Let $1\leq p,q,r,s,t\leq\infty$. If 
\begin{equation*}
\frac1p+\frac1q-1=\frac1r,\quad \,\, \, \quad\frac1t+\frac1{t^{\prime }}=1\,
,
\end{equation*}
then 
\begin{equation}  \label{mconvm}
M^{p,st}(\mathbb{R}^d)\ast M^{q,st^{\prime }}(\mathbb{R}^d)\hookrightarrow
M^{r,s}(\mathbb{R}^d)
\end{equation}
with $\| f\ast h \|_{M^{r,s}}\lesssim \|f\|_{M^{p,st}}\|h\|_{
M^{q,st^{\prime }}}$.
\end{proposition}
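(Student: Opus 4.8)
The plan is to reduce the whole statement to a single pointwise estimate for the short-time Fourier transform, after which the mixed $M^{r,s}$-norm splits into a Young-type inequality in the space variable and a H\"older inequality in the frequency variable. The crucial algebraic input I would establish first is a factorization of $V_\Phi(f\ast h)$ when the analyzing window $\Phi$ is itself a convolution. Concretely, fix two nonzero windows $\varphi_1,\varphi_2\in\mathcal{S}(\mathbb{R}^d)$ (Gaussians are convenient) and set $\Phi=\varphi_1\ast\varphi_2$, which is again a nonzero Schwartz window. Starting from the elementary identity $V_gf(x,\omega)=e^{-2\pi i x\omega}(f\ast M_\omega\tilde g)(x)$, where $M_\omega g(t)=e^{2\pi i\omega t}g(t)$ and $\tilde g(t)=\overline{g(-t)}$, together with the relations $\widetilde{\varphi_1\ast\varphi_2}=\tilde\varphi_1\ast\tilde\varphi_2$ and $M_\omega(u\ast v)=(M_\omega u)\ast(M_\omega v)$, the associativity and commutativity of convolution yield
\[
V_\Phi(f\ast h)(x,\omega)=\big(V_{\varphi_1}f(\cdot,\omega)\ast_x V_{\varphi_2}h(\cdot,\omega)\big)(x),
\]
where $\ast_x$ denotes convolution in the space variable alone, the frequency $\omega$ entering only as a shared parameter. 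Taking absolute values gives the pointwise bound $|V_\Phi(f\ast h)(x,\omega)|\le\big(|V_{\varphi_1}f(\cdot,\omega)|\ast_x|V_{\varphi_2}h(\cdot,\omega)|\big)(x)$. This identity is exactly the time-side counterpart of the Fourier-side intuition furnished by \eqref{W-M}: convolution becomes pointwise multiplication, which is why the space indices obey a Young relation and the frequency indices a H\"older relation.

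With this in hand I would compute $\|f\ast h\|_{M^{r,s}}$ using the window $\Phi$. For each fixed $\omega$, Young's inequality in the $x$-variable, with exponents satisfying $1/p+1/q-1=1/r$, gives
\[
\|V_\Phi(f\ast h)(\cdot,\omega)\|_{L^r_x}\le\|V_{\varphi_1}f(\cdot,\omega)\|_{L^p_x}\,\|V_{\varphi_2}h(\cdot,\omega)\|_{L^q_x}.
\]
Writing $F(\omega)=\|V_{\varphi_1}f(\cdot,\omega)\|_{L^p_x}$ and $G(\omega)=\|V_{\varphi_2}h(\cdot,\omega)\|_{L^q_x}$, it remains to estimate $\|FG\|_{L^s_\omega}$. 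H\"older's inequality in $\omega$ with the conjugate pair $(t,t')$, so that $1/(st)+1/(st')=1/s$, yields $\|FG\|_{L^s_\omega}\le\|F\|_{L^{st}_\omega}\|G\|_{L^{st'}_\omega}=\|f\|_{M^{p,st}}\|h\|_{M^{q,st'}}$, where in the last equality each factor is recognized as a modulation-space norm computed with the window $\varphi_1$, respectively $\varphi_2$. This is precisely \eqref{mconvm}: convolution in time produces convolution in $x$ (Young on the first indices $p,q,r$), while the frequency variable is merely carried along and split through $st=s\cdot t$ (H\"older on the second indices).

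The main point to attend to — more a matter of care than a genuine obstacle — is the rigorous justification of the factorization identity beyond the Schwartz setting, since $f\ast h$ and the convolution manipulations are transparent for $f,h\in\mathcal{S}(\mathbb{R}^d)$ but must be interpreted for general elements of modulation spaces. I would first prove the pointwise identity for $f,h\in\mathcal{S}(\mathbb{R}^d)$, then pass to the general case by density whenever $p,q,st,st'<\infty$, using the a priori estimate just obtained together with the continuity of the $M^{p,q}$-norms; the endpoint cases involving an infinite exponent follow from the same a priori bound combined with a weak-$\ast$ argument. Finally, since by the preliminaries the $M^{p,q}$-norms are independent of the nonzero Schwartz window up to equivalence, the use of the specific windows $\varphi_1,\varphi_2$ and $\Phi=\varphi_1\ast\varphi_2$ affects only the implicit constant, which is what accounts for the symbol $\lesssim$ in the conclusion.
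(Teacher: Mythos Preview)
Your argument is correct and is in fact the standard proof of this convolution relation. Note, however, that the paper does not supply its own proof of this proposition: it is quoted verbatim from \cite[Proposition~2.4]{CG02} as a preliminary tool, and the argument there is precisely the one you give --- the factorization $V_{\varphi_1\ast\varphi_2}(f\ast h)(x,\omega)=\big(V_{\varphi_1}f(\cdot,\omega)\ast_x V_{\varphi_2}h(\cdot,\omega)\big)(x)$ followed by Young in $x$ and H\"older in $\omega$. So there is nothing to compare: your proposal matches the source.
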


We also recall the useful result proved in \cite[Lemma 5.1]{cgn2}.

\begin{lemma}
\label{lemma5.1} Let $\chi\in C^\infty_c(\mathbb{R})$. Then, for $%
\zeta_1,\zeta_2\in\mathbb{R}^d$, the function $\chi(\zeta_1 \zeta_2)$
belongs to $W(\mathcal{F} L^1,L^\infty)({\mathbb{R}^{2d}})$.
\end{lemma}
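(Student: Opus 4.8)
The plan is to use the standard description of the Wiener amalgam space by uniform local Fourier-algebra norms: fixing a nonzero $\phi\in C^\infty_c(\mathbb{R}^{2d})$ supported in the unit ball, one has $\|F\|_{W(\mathcal{F}L^1,L^\infty)}\asymp\sup_{w\in\mathbb{R}^{2d}}\|F\cdot T_w\phi\|_{\mathcal{F}L^1}$, so everything reduces to bounding $\|\chi(\zeta_1\cdot\zeta_2)\,T_w\phi\|_{\mathcal{F}L^1}$ uniformly in $w=(k,l)\in\mathbb{R}^d\times\mathbb{R}^d$. The only facts about $\mathcal{F}L^1(\mathbb{R}^{2d})$ I would use are that its norm $\|g\|_{\mathcal{F}L^1}=\|\widehat g\|_{L^1}$ is invariant under translations and under invertible linear changes of variables (a one-line computation with $\widehat{g\circ A}$), and that any family of $C^\infty_c$-functions supported in a single fixed compact set with uniformly bounded derivatives up to order $2d+1$ has uniformly bounded $\mathcal{F}L^1$-norm (since then $|\widehat\psi(\xi)|\lesssim(1+|\xi|)^{-2d-1}$ with a uniform constant).

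The core is a change of variables tuned to the level hypersurfaces of $\zeta_1\cdot\zeta_2$. Fix $R$ with $\supp\chi\subseteq[-R,R]$. If $\supp T_w\phi$ misses $\{|\zeta_1\cdot\zeta_2|\le R\}$ the localized function is $0$; if $|w|$ stays bounded we are estimating the $\mathcal{F}L^1$-norm of a uniformly compactly supported smooth function with uniformly bounded derivatives, hence bounded. So I would assume $\rho:=|w|$ large. Since the gradient of $\zeta\mapsto\zeta_1\cdot\zeta_2$ at $w$ is $(l,k)$, of length $\rho$, I would translate by $w$, rotate $\mathbb{R}^{2d}$ so that this gradient points along one coordinate axis $s$, and then rescale $s$ by $\rho$; these three steps leave the $\mathcal{F}L^1$-norm unchanged and carry $\chi(\zeta_1\cdot\zeta_2)\,T_w\phi$ to
\[
h(\tau,u')=\chi\big(k\cdot l+\tau+Q(\tau/\rho,u')\big)\,\widetilde\phi(\tau/\rho,u'),\qquad\tau\in\mathbb{R},\ u'\in\mathbb{R}^{2d-1},
\]
where $Q$ is the fixed quadratic form $\zeta_1\cdot\zeta_2$ written in the rotated coordinates and $\widetilde\phi$ is a fixed bump on the unit ball. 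The two observations that make this work: on $\supp\widetilde\phi(\tau/\rho,\cdot)$ one has $|\tau/\rho|\lesssim1$ and $|u'|\lesssim1$, so the correction $Q(\tau/\rho,u')$ and all its derivatives are $O(1)$ there (the $\tau$-derivatives even $O(1/\rho)$); hence the factor $\chi(\cdots)$ confines $\tau$ to an interval of length $O(1)$ about $-k\cdot l$, the length being independent of $w$.

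It remains to translate once more in $\tau$ by $k\cdot l$: in $\widetilde h_w(\tau,u'):=h(\tau-k\cdot l,u')$ the constant $k\cdot l$ cancels inside $\chi$, so the $\chi$-factor directly confines $\tau$ to $|\tau|\lesssim1$, while the bump factor only imposes the compatible constraint $|(\tau-k\cdot l)/\rho|\le1$; hence all $\widetilde h_w$ are supported in one fixed compact box. Using that $\zeta_1\cdot\zeta_2$ is quadratic — so that all derivatives of its rotated–rescaled form of order $\ge3$ vanish, while those of orders $1$ and $2$ are $O(1)$ on the relevant support — one checks $\|\partial^\gamma\widetilde h_w\|_\infty\le C_\gamma$ uniformly in $w$. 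Therefore $\{\widetilde h_w\}$ is a bounded subset of $C^\infty_c$ of a fixed compact set, so $\sup_w\|\widetilde h_w\|_{\mathcal{F}L^1}<\infty$, which by the invariances is precisely the bound $\sup_w\|\chi(\zeta_1\cdot\zeta_2)\,T_w\phi\|_{\mathcal{F}L^1}<\infty$ we want. I expect the main obstacle to be exactly this last uniformity bookkeeping: checking that after the anisotropic rescaling every derivative of the localized symbol is controlled independently of $w=(k,l)$ and that, after the recentering in $\tau$, its support lies in a $w$-independent compact set — both points resting on the quadratic nature of $\zeta_1\cdot\zeta_2$ and on $|\tau/\rho|\lesssim1$ over the support. (An alternative, less geometric route is to write $\chi(\zeta_1\cdot\zeta_2)=\int_{\mathbb{R}}\widehat\chi(t)\,e^{2\pi i t\,\zeta_1\cdot\zeta_2}\,dt$ and use the triangle inequality in the Banach space $W(\mathcal{F}L^1,L^\infty)$ together with the rapid decay of $\widehat\chi$, reducing to a polynomial estimate $\|e^{2\pi i t\,\zeta_1\cdot\zeta_2}\|_{W(\mathcal{F}L^1,L^\infty)}\lesssim(1+|t|)^{N}$ for the Gaussian chirp, which follows from $W(\mathcal{F}L^1,L^\infty)=\mathcal{F}(M^{1,\infty})$ and the explicit Fourier/Gabor transform of a chirp.)
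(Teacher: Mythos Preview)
The paper does not actually prove this lemma: it is stated with the preface ``We also recall the useful result proved in \cite[Lemma 5.1]{cgn2}'' and no argument is given. So there is no in-paper proof to compare against, only the reference to \cite{cgn2}.

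Your main geometric argument is correct. The key invariances of the $\mathcal{F}L^1$-norm (translation and invertible linear change of variables, including the anisotropic dilation $s\mapsto\tau/\rho$) hold as you say, and after the translation--rotation--rescaling the localized function does land in a uniformly bounded family of $C^\infty_c$-functions on a fixed compact set. One point worth tightening in a written-out version: the rotation depends on $w$, so the rotated quadratic form $Q$ and the rotated bump $\widetilde\phi$ depend on $w$ as well; you should note explicitly that the rotations range over the compact group $O(2d)$, so the coefficients of $Q$ and the derivatives of $\widetilde\phi$ stay uniformly bounded. With that said, the support and derivative bookkeeping you sketch goes through.

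Your parenthetical alternative --- Fourier inversion $\chi(\zeta_1\!\cdot\!\zeta_2)=\int_{\mathbb{R}}\widehat\chi(t)\,e^{2\pi i t\,\zeta_1\cdot\zeta_2}\,dt$ combined with a polynomial bound $\|e^{2\pi i t\,\zeta_1\cdot\zeta_2}\|_{W(\mathcal{F}L^1,L^\infty)}\lesssim(1+|t|)^N$ --- is almost certainly the route taken in \cite{cgn2}, since exactly this type of chirp estimate (via dilation properties of Wiener amalgam spaces, cf.\ the proof of Proposition~\ref{tauW} and \cite{CNJFA2008,sugitomita2}) is the workhorse throughout the present paper. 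That approach is shorter and plugs directly into the existing toolkit; your geometric argument is more self-contained and makes the role of the quadratic nature of $\zeta_1\!\cdot\!\zeta_2$ more transparent, at the cost of the coordinate bookkeeping you flag.
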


\section{Analysis of the Cohen kernel $\Theta$}

Consider the Cohen kernel $\Theta$ defined in \eqref{sincxp}. Obviously $%
\Theta\in\mathcal{C}^\infty({\mathbb{R}^{2d}})\cap L^\infty({\mathbb{R}^{2d}}%
)$ but displays a vary bad decay at infinity, as clarified in what follows.

\begin{proposition}
\label{thetalp} For $1\leq p< \infty$, the function $\Theta\notin L^p({%
\mathbb{R}^{2d}})$.
\end{proposition}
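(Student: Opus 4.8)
The plan is to show that $\Theta(z)=\mathrm{sinc}(x\omega)$ fails to be $p$-integrable on $\mathbb{R}^{2d}$ because it does not decay fast enough along the directions where $x$ and $\omega$ are comparable. The key point is that $|\mathrm{sinc}(t)|\geq c/|t|$ on a set of $t$ of positive density (namely away from the zeros $t\in\pi\mathbb{Z}$, say on intervals $[\pi k+\pi/4,\pi k+3\pi/4]$), so $|\Theta(x,\omega)|^p\gtrsim |x\omega|^{-p}$ on a substantial portion of phase space, and this is not locally integrable against the $2d$-dimensional measure near the hypersurface $\{x\omega=0\}$ once we also account for the lack of decay at infinity.

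Concretely, first I would reduce to $d=1$ by observing that if $\Theta\notin L^p(\mathbb{R}^2)$ then, writing $z=(x,\omega)\in\mathbb{R}^d\times\mathbb{R}^d$ and restricting to $x=(x_1,0,\dots,0)$, $\omega=(\omega_1,0,\dots,0)$, one has $\mathrm{sinc}(x\omega)=\mathrm{sinc}(x_1\omega_1)$; integrating $|\mathrm{sinc}(x_1\omega_1)|^p$ over a strip where the remaining $2d-2$ coordinates range over a fixed compact set shows the full integral is infinite provided the $d=1$ integral over an unbounded region is infinite. So it suffices to prove $\int_{\mathbb{R}^2}|\mathrm{sinc}(x\omega)|^p\,dx\,d\omega=\infty$ for $d=1$. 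Next, perform the change of variables $u=x\omega$, $v=x$ (on the region $x>0$, $\omega>0$, say), with $du\,dv=x\,dx\,d\omega=v\,dx\,d\omega$, so the integral becomes $\int_0^\infty\int_0^\infty |\mathrm{sinc}(u)|^p\,\frac{1}{v}\,du\,dv$. The inner factor $\int_0^\infty \frac{dv}{v}$ already diverges (logarithmically, at both $0$ and $\infty$), so as long as $\int_0^\infty|\mathrm{sinc}(u)|^p\,du>0$ — which is obvious — the double integral is $+\infty$.

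Actually, to be careful about the order of integration and the possibility of a $0\cdot\infty$ situation, I would instead fix a bounded interval, e.g. integrate $v$ over $[1,2]$ (so $\int_1^2 dv/v=\log 2$ is a positive finite constant) and $u$ over $(0,\infty)$; then $\int_{\{1\le v\le 2\}}\int_{\{u>0\}}|\mathrm{sinc}(u)|^p\,\frac{du\,dv}{v}=\log 2\cdot\int_0^\infty|\mathrm{sinc}(u)|^p\,du$, and this last integral is $+\infty$ for every $p\in[1,\infty)$ because $|\mathrm{sinc}(u)|^p\gtrsim u^{-p}$ is not the issue — rather, $\sum_k \int_{\pi k+\pi/4}^{\pi k+3\pi/4}|\mathrm{sinc}(u)|^p\,du \gtrsim \sum_k (\pi k)^{-p}$, which converges; so I must be more careful. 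The honest statement is: $\int_0^\infty |\mathrm{sinc}(u)|^p\,du=\infty$ only for $p=1$; for $p>1$ it is finite. Hence the divergence for $p>1$ must come entirely from the $v$-integral $\int_0^\infty dv/v=\infty$, not from the $u$-integral. So the clean argument is: over the region $\{x\omega\in[0,1],\ x\in(0,\infty)\}$ (or equivalently $0<u<1$, $0<v$), we have $|\mathrm{sinc}(u)|\ge c_0>0$ (since $\mathrm{sinc}$ is continuous and nonzero on $[0,1]$), so the integral dominates $c_0^p\int_0^1 du\int_0^\infty \frac{dv}{v}=+\infty$. This handles all $p\in[1,\infty)$ uniformly.

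The main obstacle — really the only subtlety — is getting the Jacobian and the limits of the substitution $u=x\omega$, $v=x$ right, and making sure one isolates a genuinely divergent piece ($\int dv/v$ over an unbounded or degenerating $v$-range) multiplied by a strictly positive finite piece, rather than accidentally pairing a divergence with a zero. Once the region $\{0<x\omega<1\}$ is used, on which $\mathrm{sinc}$ is bounded below away from $0$, the divergence is transparent and the $d\ge 1$ reduction is immediate. I would therefore present the $d=1$ computation in full and remark that the general $d$ case follows by restricting to a coordinate plane and using Tonelli to factor out a positive finite contribution from the orthogonal directions.
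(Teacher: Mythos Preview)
Your eventual argument --- bound $|\mathrm{sinc}|$ from below on the region where its argument is small, and show that region has infinite measure --- is exactly the paper's. The paper does it in two lines, directly in dimension $2d$: for $|t|\le 1/2$ one has $|\mathrm{sinc}\,t|\ge 2/\pi$, hence
\[
\int_{\mathbb{R}^{2d}}|\Theta(x,\omega)|^p\,dx\,d\omega \;\ge\; \Big(\tfrac{2}{\pi}\Big)^p\,\mathrm{meas}\{(x,\omega)\in\mathbb{R}^{2d}:|x\cdot\omega|\le 1/2\}=+\infty.
\]
No change of variables, no reduction to $d=1$.

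Your write-up has two slips worth fixing. The minor one: $\mathrm{sinc}(1)=\sin\pi/\pi=0$, so $\mathrm{sinc}$ is \emph{not} bounded away from zero on $[0,1]$; use $[0,1/2]$ instead (this is exactly the paper's choice) and your $d=1$ computation via $u=x\omega$, $v=x$ goes through. The more substantive one is the reduction from general $d$ to $d=1$: on a strip where the remaining $2d-2$ coordinates range over a compact set you are integrating $|\mathrm{sinc}(x\cdot\omega)|^p$ with $x\cdot\omega=x_1\omega_1+\cdots+x_d\omega_d$, \emph{not} $|\mathrm{sinc}(x_1\omega_1)|^p$, so the function does not factor and Tonelli does not separate the variables as you claim. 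The repair is either to take the strip thin enough that $|x\cdot\omega-x_1\omega_1|\le 1/4$ there and reuse the lower bound on $\mathrm{sinc}$, or --- much cleaner --- to drop the reduction and argue directly in $\mathbb{R}^{2d}$ as the paper does, since $\{|x\cdot\omega|\le 1/2\}$ already has infinite measure for every $d\ge 1$.
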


\begin{proof}
Observe that, for $t\in\mathbb{R}$, $|t|\leq 1/2$, the function $\mathrm{sinc%
}t$ satisfies $|\mathrm{sinc}\,t|\geq 2/\pi$. Hence, for any $1\leq p<\infty$%
, 
\begin{align*}
\int_{{\mathbb{R}^{2d}}}|\Theta(x,\omega)|^p\,dx d\omega&= \int_{{\mathbb{R}
^{2d}}} |\mathrm{sinc}(x\omega)|^p\,dx d\omega \\
&\geq \int_{|xp|\leq 1/2} |\mathrm{sinc}(x\omega)|^p\,dx d\omega \\
&\geq \left(\frac 2\pi\right)^p \int_{|x\omega|\leq 1/2} \,d x d\omega \\
& =\left(\frac 2\pi\right)^p meas\{(x,\omega) \,:\,|x\omega|\leq
1/2\}=+\infty.
\end{align*}
This concludes the proof.
\end{proof}

We continue our investigation of the function $\Theta$ by looking for the
right Wiener amalgam and modulation spaces containing this function. For
this reason, we first reckon explicitly the STFT of the $\Theta$ function,
with respect to the Gaussian window $g(x,\omega)=e^{-\pi x^2} e^{-\pi
\omega^2}\in\mathcal{S}({\mathbb{R}^{2d}})$.

\begin{proposition}
\label{sincSTFT} For $z_1,z_2, \zeta_1,\zeta_2\in\mathbb{R}^d$, 
\begin{align}  
V_g & \Theta (z_1,z_2, \zeta_1,\zeta_2) \\
&=\int_{-1/2}^{1/2} \frac{1}{(t^2+1)^{d/2}} e^{-2\pi i [\frac1t \zeta_1
\zeta_2+\frac{t}{t^2+1} (z_1-\frac1t \zeta_2) (z_2-\frac1t \zeta_1)]}e^{-\pi 
\frac{t^2}{t^2+1} [(z_1-\frac1t \zeta_2)^2+(z_2-\frac1t \zeta_1)^2]} \,dt.  \label{Vgsinc}
\end{align}
\end{proposition}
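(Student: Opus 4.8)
The plan is to compute the short-time Fourier transform $V_g\Theta$ directly from the definition $V_g\Theta(w,\zeta)=\int_{\mathbb{R}^{2d}}\Theta(z)\,\overline{g(z-w)}\,e^{-2\pi i z\cdot\zeta}\,dz$ with $w=(z_1,z_2)$, $\zeta=(\zeta_1,\zeta_2)$, by first replacing $\Theta$ by an integral representation that turns the oscillatory factor $\mathrm{sinc}(xy)$ into a family of ordinary Gaussians. The key observation is that $\mathrm{sinc}(xy)=\frac{\sin(\pi xy)}{\pi xy}=\int_{-1/2}^{1/2}e^{2\pi i t\, xy}\,dt$ (here $xy$ denotes the scalar $x\cdot y$ when $d>1$; one checks the identity componentwise using $\int_{-1/2}^{1/2}e^{2\pi i s u}\,ds=\mathrm{sinc}(u)$). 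Substituting this into the STFT integral and exchanging the order of integration (justified by Fubini, since after inserting the Gaussian window the integrand is absolutely integrable on $[-1/2,1/2]\times\mathbb{R}^{2d}$), one is left, for each fixed $t$, with a Gaussian integral over $z=(x,\omega)\in\mathbb{R}^{2d}$ of the form
\begin{equation*}
\int_{\mathbb{R}^{2d}} e^{2\pi i t\, x\omega}\,e^{-\pi|z-w|^2}\,e^{-2\pi i z\cdot\zeta}\,dz.
\end{equation*}

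The main computational step is then to evaluate this inner Gaussian integral explicitly. The quadratic form in the exponent couples $x$ and $\omega$ through the term $2\pi i t\,x\omega$ together with the two decoupled Gaussians $e^{-\pi(x-z_1)^2}$ and $e^{-\pi(\omega-z_2)^2}$ and the linear phase $e^{-2\pi i(x\zeta_1+\omega\zeta_2)}$. I would diagonalize by completing the square: after shifting $x\mapsto x+z_1$, $\omega\mapsto\omega+z_2$ one obtains a centered Gaussian with the symmetric coupling matrix $\begin{pmatrix} I & -it I\\ -it I & I\end{pmatrix}$ (up to the factor $\pi$), whose determinant is $(1+t^2)^d$, and a linear term whose coefficients involve $z_1,z_2,\zeta_1,\zeta_2,t$. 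Applying the standard Gaussian formula $\int e^{-\pi z\cdot Az + 2\pi i b\cdot z}\,dz = (\det A)^{-1/2} e^{-\pi b\cdot A^{-1} b}$ (valid here by analytic continuation from $\mathrm{Re}\,A>0$, since $A=I-itJ'$ with the off-diagonal block structure has positive-definite real part), and inverting the $2\times2$ block matrix $\begin{pmatrix} 1 & -it\\ -it & 1\end{pmatrix}^{-1}=\frac{1}{1+t^2}\begin{pmatrix} 1 & it\\ it & 1\end{pmatrix}$, yields the prefactor $(1+t^2)^{-d/2}$ and an exponent that, after reorganizing, should match the combination $-2\pi i\bigl[\frac1t\zeta_1\zeta_2+\frac{t}{t^2+1}(z_1-\frac1t\zeta_2)(z_2-\frac1t\zeta_1)\bigr]-\pi\frac{t^2}{t^2+1}\bigl[(z_1-\frac1t\zeta_2)^2+(z_2-\frac1t\zeta_1)^2\bigr]$ claimed in \eqref{Vgsinc}.

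The main obstacle I anticipate is purely algebraic: correctly matching the exponent produced by the generic Gaussian formula with the specific, somewhat unusual grouping in \eqref{Vgsinc}, in particular tracking where the apparently singular term $\frac1t\zeta_1\zeta_2$ comes from. I expect this term arises because the cross-phase $e^{2\pi i t x\omega}$ contributes, after completing the square against the linear phase $e^{-2\pi i(x\zeta_1+\omega\zeta_2)}$, a term proportional to $t^{-1}\zeta_1\zeta_2$ once one factors out the natural shift variables $z_1-\frac1t\zeta_2$ and $z_2-\frac1t\zeta_1$; the apparent singularity at $t=0$ is spurious and cancels against the structure of the remaining terms (indeed the original integrand is smooth in $t$). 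A careful bookkeeping — perhaps first doing the $d=1$ case to fix the shape of the answer and then observing that everything factors over the $d$ coordinate pairs since both $g$ and the phase split as products — will make the general-$d$ statement follow at once. Finally I would note that the resulting $t$-integrand is bounded uniformly on $[-1/2,1/2]$ away from $t=0$ and, together with the smoothness remark, that the integral in \eqref{Vgsinc} is well-defined, so the interchange of integrals is fully justified and the proof is complete.
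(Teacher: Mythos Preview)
Your approach is correct and is essentially the same as the paper's: both start from the integral representation $\mathrm{sinc}(x\cdot\omega)=\int_{-1/2}^{1/2}e^{2\pi i t\,x\cdot\omega}\,dt$, swap the $t$-integral with the STFT, and then evaluate for each fixed $t$ the resulting Gaussian integral. The only differences are tactical: the paper splits the $t$-integral as $\Theta=F_1+F_1\circ J$ with $F_1=\int_0^{1/2}e^{2\pi i t z_1 z_2}\,dt$ (recovering the negative-$t$ half by the symmetry $V_gF_2(z,\zeta)=V_gF_1(Jz,J\zeta)$), and it computes the inner integral iteratively by first applying $\mathcal{F}H_t(\zeta)=t^{-d}e^{-2\pi i\zeta_1\zeta_2/t}$ and then convolving with $M_{-z}\hat g$, which makes the origin of the $\tfrac{1}{t}\zeta_1\zeta_2$ term transparent; you instead propose the block-matrix Gaussian formula with $A=\begin{pmatrix}I&-itI\\-itI&I\end{pmatrix}$, which is a cleaner bookkeeping device and yields the same prefactor $(1+t^2)^{-d/2}$ and exponent after inversion.
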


\begin{proof}
We write $\Theta(z_1,z_2)=F_1(z_1,z_2)+F_2(z_1,z_2)$, where $F_1(z_1,z_2)=\int_0^{1/2} e^{2\pi i z_1 z_2 t} dt$ and $F_2(z)=F_1(Jz)$, $z=(z_1,z_2)$. Let us first reckon $V_g F_1(z,\zeta)$, $z=(z_1,z_2)$, $\zeta=(\zeta_1,\zeta_2)\in{\mathbb{R}^{2d}}$, where $g$ is the Gaussian
function above. For $t>0$ we define the function $H_t(z_1,z_2)=e^{2\pi i t
z_1 z_2}$ and observe that 
\begin{equation}  \label{ftchirp}
\mathcal{F} H_t (\zeta_1,\zeta_2)=\frac 1{t^{d}} e^{-2\pi i\frac1t
\zeta_1\zeta_2 }
\end{equation}
(cf. \cite[Appendix A, Theorem 2]{folland}). By the Dominated Convergence
Theorem, 
\begin{align*}
V_g F_1(z,\zeta)&=\int_0^{1/2}\mathcal{F} (H_tT_z g)(\zeta) dt=\int_0^{1/2}(\mathcal{F}(H_t)\ast M_{-z}\hat{g})(\zeta_1,\zeta_2) dt \\
&=\int_0^{1/2} \frac1{t^d} \int_{{\mathbb{R}^{2d}}} e^{-2\pi i\frac1 t
(\zeta_1- y_1)\cdot(\zeta_2-y_2)} e^{-2\pi i (z_1,z_2)\cdot (y_1,y_2)}
e^{-\pi y_1^2} e^{-\pi y_2^2} \,dy_1 dy_2 dt \\
&=\int_0^{1/2} \frac1{t^d} e^{-2\pi i \frac1t \zeta_1 \zeta_2} \!\!\int_{{%
\mathbb{R}^{2d}}} \!\! e^{-2\pi i\frac1t y_1 y_2 +2\pi i \frac1t (\zeta_2
y_1+\zeta_1 y_2)-2\pi i (z_1 y_1 + z_2 y_2)} e^{-\pi (y_1^2+ y_2^2)}\, dy_1
dy_2dt \\
&= \int_0^{1/2} \frac1{t^d} e^{-2\pi i \frac1t \zeta_1 \zeta_2}\int_{\mathbb{%
R}^d} e^{2\pi i ( \frac1t \zeta_1 y_2-z_2 y_2)} e^{-\pi y_2^2} \\
&\quad\quad \quad \cdot\quad \left( \int_{\mathbb{R}^d} e^{-2\pi i y_1\cdot
( \frac1t y_2- \frac1t \zeta_2 +z_1)} e^{-\pi y_1^2}\, dy_1\right) \, dy_2 dt
\\
&=\int_0^{1/2} \frac1{t^d} e^{-2\pi i \frac1t \zeta_1 \zeta_2} \int_{\mathbb{R}^d} e^{-2\pi i y_2\cdot (z_2- \frac1t \zeta_1)} e^{-\pi y_2^2} e^{-\pi(
\frac1t y_2- \frac1t \zeta_2+z_1)^2}\,dy_2 dt \\
&=\int_0^{1/2} \!\!\frac1{t^d} e^{-2\pi i \frac1t \zeta_1 \zeta_2} e^{-\pi
(z_1-\frac1t \zeta_2)^2}\!\int_{\mathbb{R}^d} \!e^{-2\pi i y_2\cdot
(z_2-\frac1t \zeta_1)} e^{-\pi ((1+\frac1{t^2})y_2^2-2(z_1-\frac1t
\zeta_2)\cdot \frac1t y_2)} dy_2 dt \\
&= \int_0^{1/2} \!\frac1{t^d} e^{-2\pi i \frac1t \zeta_1 \zeta_2} e^{-\pi 
\frac{t^2}{t^2+1} (u_1-\frac1t \zeta_2)^2}\int_{\mathbb{R}^d} e^{-2\pi i y_2
\cdot (u_2-\frac1t \zeta_1)} \\
&\quad\quad\quad \cdot\quad e^{-\pi (\frac{\sqrt{t^2+1}}{t}y_2-\frac{t}{%
\sqrt{t^2+1}}(z_1-\frac1t \zeta_2))^2}dy_2 dt \\
&=\int_0^{1/2}\frac{1}{(t^2+1)^{d/2}}e^{-2\pi i\frac 1t \zeta_1 \zeta_2}
e^{-\pi \frac{t^2}{t^2+1} (z_1-\frac1t \zeta_2)^2} \\
&\quad\quad\quad \cdot\quad \int_{\mathbb{R}^d} e^{-2\pi i(\frac{t}{\sqrt{t^2+1}}w +\frac{t}{t^2+1} (z_1-\frac1t \zeta_2))(z_2-\frac1t
\zeta_1)}e^{-\pi w^2}dw dt \\
&=\int_0^{1/2} \!\!\!\frac{1}{(t^2+1)^{d/2}} e^{-2\pi i \frac1t \zeta_1
\zeta_2}e^{-\pi \frac{t^2}{t^2+1} (z_1-\frac1t \zeta_2)^2}e^{-\pi \frac{t^2}{t^2+1} (z_2-\frac1t \zeta_1)^2} \\
&\quad\quad\quad \cdot\quad e^{-2\pi i \frac{t}{t^2+1} (z_1-\frac1t \zeta_2)
(z_2-\frac1t \zeta_1)}dt.
\end{align*}
Now, an easy computation shows 
\begin{equation*}
V_g F_2 (z,\zeta)= V_g F_1(Jz,J\zeta)
\end{equation*}
so that $V_g \Theta=V_g F_1+V_g F_2$ and we obtain \eqref{Vgsinc}.
\end{proof}

\begin{proposition}
\label{sincxpwiener} The function $\Theta$ in \eqref{sincxp} belongs to $W(\mathcal{F} L^1,L^\infty)({\mathbb{R}^{2d}})$.
\end{proposition}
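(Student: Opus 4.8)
The plan is to bound the $W(\mathcal{F}L^{1},L^{\infty })$-norm of $\Theta $ directly, using the explicit formula for $V_g\Theta $ established in Proposition \ref{sincSTFT}. Writing $z=(z_1,z_2)$ for the space variable and $\zeta =(\zeta _1,\zeta _2)$ for the frequency variable in $\mathbb{R}^{2d}$, one has
\[
\|\Theta \|_{W(\mathcal{F}L^{1},L^{\infty })}=\sup_{z\in \mathbb{R}^{2d}}\int_{\mathbb{R}^{2d}}|V_g\Theta (z,\zeta )|\,d\zeta ,
\]
the essential supremum coinciding with the supremum since $V_g\Theta $ is continuous (recall $\Theta \in \mathcal{C}^{\infty }\cap L^{\infty }$). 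Thus it is enough to bound this integral by a constant not depending on $z$.

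First I would move the modulus inside the $t$-integral in \eqref{Vgsinc}: since the oscillatory factor has modulus one and the prefactor $(t^{2}+1)^{-d/2}$ is positive, the triangle inequality for integrals gives
\[
|V_g\Theta (z,\zeta )|\le \int_{-1/2}^{1/2}\frac{1}{(t^{2}+1)^{d/2}}\,e^{-\pi \frac{t^{2}}{t^{2}+1}\left[(z_1-\frac1t\zeta _2)^{2}+(z_2-\frac1t\zeta _1)^{2}\right]}\,dt .
\]
The point to be noticed here is the algebraic identity $\frac{t^{2}}{t^{2}+1}(z_1-\frac1t\zeta _2)^{2}=\frac{(tz_1-\zeta _2)^{2}}{t^{2}+1}$, and likewise with $(z_2,\zeta _1)$: the apparent singularities at $t=0$ cancel, and the Gaussian exponent is simply $\frac{(tz_1-\zeta _2)^{2}+(tz_2-\zeta _1)^{2}}{t^{2}+1}$, which is smooth and bounded for $t\in[-1/2,1/2]$.

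Next, integrating in $\zeta $ and applying Tonelli's theorem (all quantities being nonnegative) to interchange the $dt$ and $d\zeta $ integrations, for each fixed $t$ the inner integral factors as the product of two Gaussian integrals in $\zeta _1$ and $\zeta _2$; the translations $\zeta _2\mapsto \zeta _2+tz_1$, $\zeta _1\mapsto \zeta _1+tz_2$ together with $\int_{\mathbb{R}^{d}}e^{-\pi |u|^{2}/(t^{2}+1)}\,du=(t^{2}+1)^{d/2}$ yield the value $(t^{2}+1)^{d}$, independent of $z$. Therefore
\[
\int_{\mathbb{R}^{2d}}|V_g\Theta (z,\zeta )|\,d\zeta \le \int_{-1/2}^{1/2}\frac{(t^{2}+1)^{d}}{(t^{2}+1)^{d/2}}\,dt=\int_{-1/2}^{1/2}(t^{2}+1)^{d/2}\,dt<\infty ,
\]
and the right-hand side does not depend on $z$. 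Taking the supremum over $z$ proves $\Theta \in W(\mathcal{F}L^{1},L^{\infty })(\mathbb{R}^{2d})$.

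Once Proposition \ref{sincSTFT} is in hand the argument is short, and I do not expect a genuine obstacle: the only step deserving attention is the cancellation of the $1/t$ terms described above, which is exactly what makes the $t$-integral harmless near the origin (alternatively one could split the range of $t$ and treat small $t$ separately, but this is unnecessary). I would also remark that one cannot simply invoke Lemma \ref{lemma5.1}, since $\Theta =\mathrm{sinc}$ is not compactly supported; a cutoff decomposition $\Theta =\chi (z_1z_2)\Theta +(1-\chi (z_1z_2))\Theta $ would reduce the first summand to Lemma \ref{lemma5.1}, but the tail $(1-\chi (z_1z_2))\,\mathrm{sinc}(z_1z_2)$ would still require a separate treatment, so the direct route via \eqref{Vgsinc} is preferable.
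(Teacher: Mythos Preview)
Your proof is correct and follows essentially the same route as the paper: bound $|V_g\Theta|$ by the absolute value of the integrand in \eqref{Vgsinc}, use the identity $\tfrac{t^2}{t^2+1}(z_i-\tfrac1t\zeta_j)^2=\tfrac{(tz_i-\zeta_j)^2}{t^2+1}$, interchange the order of integration, and evaluate the resulting Gaussian integrals in $\zeta$ to obtain the $z$-independent bound $\int_{-1/2}^{1/2}(t^2+1)^{d/2}\,dt$. Your additional remarks on the cancellation at $t=0$ and on why Lemma \ref{lemma5.1} is not directly applicable are accurate but go beyond what the paper records.
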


\begin{proof}
We simply have to calculate 
\begin{equation*}
\sup_{z\in{\mathbb{R}^{2d}}}\int_{{\mathbb{R}^{2d}}} |V_g \Theta(z,\zeta)|
d\zeta.
\end{equation*}
From \eqref{Vgsinc} we observe that 
\begin{align*}
\|V_g \Theta (z,\cdot)\|_1&\leq \int_{-1/2}^{1/2} \int_{{\mathbb{R}^{2d}}} 
\frac{1}{(t^2+1)^{d/2}} e^{-\pi \frac{t^2}{t^2+1} (z_1-\frac1t
\zeta_2)^2}e^{-\pi \frac{t^2}{t^2+1} (z_2-\frac1t \zeta_1)^2} d\zeta_1
d\zeta_2 dt \\
&= \int_{-1/2}^{1/2} \int_{{\mathbb{R}^{2d}}} \frac{1}{(t^2+1)^{d/2}}
e^{-\pi \frac{1}{t^2+1} ( t z_1- \zeta_2)^2}e^{-\pi \frac{1}{t^2+1} (t z_2-
\zeta_1)^2} d\zeta_1 d\zeta_2 dt \\
&= \int_{-1/2}^{1/2} \int_{{\mathbb{R}^{2d}}} (t^2+1)^{d/2} e^{-\pi ( v_1^2
+v_2^2)} dv_1 dv_2 dt=C<\infty,
\end{align*}
from which the claim follows.
\end{proof}

Using the STFT of the function $\Theta$ in \eqref{Vgsinc} we observe that 
\begin{equation*}
\|V_g \Theta (\cdot,\zeta)\|_1\leq \int_{-1/2}^{1/2} \int_{{\mathbb{R}^{2d}}%
} \frac{1}{(t^2+1)^{d/2}} e^{-\pi \frac{t^2}{t^2+1} (u_1-\frac1t
\zeta_2)^2}e^{-\pi \frac{t^2}{t^2+1} (u_2-\frac1t \zeta_1)^2} du_1 du_2
dt=+\infty
\end{equation*}
so that we conjecture that $\Theta \notin M^{1,\infty}({\mathbb{R}^{2d}})$.
The previous claim will follow if we prove that $\Theta_{\sigma} \notin W(%
\mathcal{F} L^1,L^\infty)({\mathbb{R}^{2d}})$.

Note that $\Theta_{\sigma}(\zeta)=\mathcal{F} \Theta(J\zeta)=\mathcal{F}
\Theta(\zeta)$. Furthermore, the distributional Fourier transform of $\Theta$
can be computed explicitly as follows. First, recall the definition of the
cosine integral function \eqref{cosint}.

\begin{proposition}
\label{Thetasigma} For $d\geq 1$ the distribution symplectic Fourier
transform \thinspace\ $\Theta _{\sigma }$ of the function $\Theta $ is
provided by 
\begin{equation}
\Theta _{\sigma }(\zeta _{1},\zeta _{2})=%
\begin{cases}
-2\,\mathrm{Ci}(4\pi |\zeta _{1}\zeta _{2}|),\quad (\zeta _{1},\zeta
_{2})\in \mathbb{R}^{2},\,d=1 \\ 
\mathcal{F}(\chi _{\{|s|\geq 2\}}|s|^{d-2})(\zeta _{1},\zeta _{2}),\quad
(\zeta _{1}\zeta _{2})\in {\mathbb{R}^{2d}},\,d\geq 2,%
\end{cases}
\label{C1intro}
\end{equation}%
where $\chi _{\{|s|\geq 2\}}$ is the characteristic function of the set $%
\{s\in \mathbb{R}\,:\,|s|\geq 2\}$. The case $d=1$ can be recaptured by the
case $d\geq 2$ using \eqref{cosint}.
\end{proposition}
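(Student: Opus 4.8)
The plan is to compute the symplectic Fourier transform of $\Theta(z)=\mathrm{sinc}(x\omega)$ by reducing it to a one-dimensional Fourier transform in the scalar variable $s=x\omega$ (more precisely $s=2x\omega$ after tracking constants), exploiting the product structure of the symplectic phase $\sigma(\zeta,z)=\omega\cdot\zeta_1-x\cdot\zeta_2$. First I would recall that $\Theta_\sigma(\zeta)=\mathcal F\Theta(J\zeta)=\mathcal F\Theta(\zeta)$, since $\Theta$ is invariant under $z\mapsto Jz$, so it suffices to compute the ordinary distributional Fourier transform. The key observation is that $\mathrm{sinc}(x\omega)$ depends on $(x,\omega)$ only through the bilinear quantity $x\cdot\omega$ (in $d=1$) or, in general, through pairing each coordinate $x_j\omega_j$. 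So I would write $\mathrm{sinc}(t)=\tfrac12\int_{-1}^{1}e^{\pi i t u}\,du$ (or the equivalent half-line version already used in the proof of Proposition \ref{sincSTFT}), insert this integral representation, and interchange the $du$-integration with the Fourier transform in $z$. The inner object is then the Fourier transform of a pure chirp $e^{\pi i u x\omega}$, which by the formula cited from Folland (cf. \eqref{ftchirp}) is again a chirp, up to the factor $|u|^{-d}$ and a sign. One then has to integrate this over $u\in[-1,1]$.

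The next step is to carry out the $u$-integration carefully in the distributional sense. For $d=1$ the Fourier transform of $e^{\pi i u x\omega}$ produces $|u|^{-1}e^{-\pi i \zeta_1\zeta_2/u}$; substituting $s=1/u$ converts $\int_{|u|\le 1}|u|^{-1}e^{-\pi i\zeta_1\zeta_2/u}\,du$ into $\int_{|s|\ge 1}|s|^{-1}e^{-\pi i s\zeta_1\zeta_2}\,ds$ (after a constant rescaling to bring it to the stated $\{|s|\ge 2\}$ normalization), and recognizing $-\int_{t}^{\infty}\frac{\cos\tau}{\tau}\,d\tau=\mathrm{Ci}(t)$ identifies this with $-2\,\mathrm{Ci}(4\pi|\zeta_1\zeta_2|)$; the imaginary (sine) part vanishes by oddness of the integrand in $s$. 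For $d\ge2$ the chirp Fourier transform carries $|u|^{-d}$, and the same substitution $s=1/u$ turns $\int_{|u|\le1}|u|^{-d}\,\cdot\,du$ into $\int_{|s|\ge1}|s|^{d-2}\,\cdot\,ds$, i.e. $\mathcal F(\chi_{\{|s|\ge2\}}|s|^{d-2})$ evaluated at the product variable $\zeta_1\zeta_2$, which is exactly \eqref{C1intro}. The last sentence of the statement, that the $d=1$ case is recaptured from the $d\ge2$ formula via \eqref{cosint}, is then just the remark that $\mathcal F(\chi_{\{|s|\ge2\}}|s|^{-1})$ is, up to constants, $\mathrm{Ci}$.

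The main obstacle is the rigorous justification of all the interchanges and of the substitution $u\mapsto 1/u$ at the level of \emph{tempered distributions}: the chirp $e^{\pi i u x\omega}$ is not integrable, $|u|^{-d}$ is badly non-integrable near $u=0$ (which is precisely the source of the singularity $\zeta_1\zeta_2\to 0$ and the $L^\infty_{loc}$-failure advertised in the introduction), and the Fubini step exchanging $du$ with the Fourier integral is not literally legitimate. I would handle this by testing against a Schwartz function $\varphi\in\mathcal S(\mathbb R^{2d})$: write $\langle\Theta_\sigma,\varphi\rangle=\langle\Theta,\mathcal F\varphi\rangle$, which is an absolutely convergent integral since $\Theta\in L^\infty$ and $\mathcal F\varphi\in\mathcal S$; then insert the bounded integral representation of $\mathrm{sinc}$ (for which Fubini on $[-1,1]\times\mathbb R^{2d}$ is justified by $|\Theta|\le1$), and recognize the resulting $u$-integral of Gaussian/chirp pairings — which is exactly what was already computed in Proposition \ref{sincSTFT} with $g$ replaced by a general test function. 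Equivalently, one can regularize by $\Theta(z)e^{-\varepsilon|z|^2}$, compute the (now classical) Fourier transform, and pass to the limit $\varepsilon\to0^+$ in $\mathcal S'$, checking that the limit of the $u$-integral is the claimed locally integrable (for $d\le 2$) or distributional (for $d\ge3$) function. The somewhat delicate point worth spelling out is the convergence of $\int_{|s|\ge1}|s|^{d-2}e^{-\pi i s\zeta_1\zeta_2}\,ds$ for $d\ge3$, where $|s|^{d-2}$ grows and the object only makes sense as the Fourier transform of a homogeneous distribution restricted away from the origin; here I would simply note that $\chi_{\{|s|\ge2\}}|s|^{d-2}$ is a tempered distribution on $\mathbb R$ and that the map sending a scalar tempered distribution $\kappa(s)$ to $\kappa(\zeta_1\zeta_2)$, composed with the Fourier transform, is well defined on $\mathbb R^{2d}$ by the same pullback-of-the-quadratic-form argument (tensorizing over coordinates for $d\ge2$), which is what makes the right-hand side of \eqref{C1intro} meaningful.
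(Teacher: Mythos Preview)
Your approach is correct and leads to the same formula, but it is organized differently from the paper's proof. The paper does \emph{not} write $\mathrm{sinc}$ as a superposition of chirps and then Fourier-transform each chirp; instead it computes the Fourier transform of $\Theta$ by iterated \emph{partial} Fourier transforms. In dimension $d=1$ it takes $\mathcal F_1$ first, using directly that the one-dimensional Fourier transform of $\mathrm{sinc}(y_2\,\cdot)$ is the box function $|y_2|^{-1}p_{1/2}(\zeta_1/y_2)$, and then applies $\mathcal F_2$, which immediately produces the integral $\int_{|y_2|\ge 2|\zeta_1|}|y_2|^{-1}e^{-2\pi i\zeta_2 y_2}\,dy_2$ and hence $-2\,\mathrm{Ci}(4\pi|\zeta_1\zeta_2|)$; the chirp formula is never invoked in $d=1$. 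For $d\ge2$ the paper splits $\mathcal F=\mathcal F_{2d}\,\mathcal F'\,\mathcal F_1$: first $\mathcal F_1$ in the last $x$-coordinate (again the box function), then $\mathcal F'$ on the remaining $2d-2$ mixed variables, where the chirp formula \eqref{ftchirp} is applied to the modulation factor $e^{2\pi i(\zeta_{1,d}/z_{2,d})z_1'\cdot z_2'}$, and finally $\mathcal F_{2d}$ in the last $\omega$-coordinate followed by the substitution $s=z_{2,d}/\zeta_{1,d}$.

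Your route---integral representation of $\mathrm{sinc}$, full $2d$-dimensional chirp formula, then the change of variable $s=c/u$---is more uniform in $d$ and conceptually cleaner, and your distributional justification (pair with $\varphi\in\mathcal S$, use that $t\mapsto\langle e^{2\pi i t x\cdot\omega},\mathcal F\varphi\rangle$ is bounded on $[-1/2,1/2]$, so Fubini applies before invoking the chirp identity) is exactly the right way to avoid the spurious $|t|^{-d}$ divergence. The paper's route, by contrast, sidesteps that issue entirely: the singular behaviour appears only as the harmless support condition $|y_2|\ge 2|\zeta_1|$ coming from the box function, so no regularization or weak-integral argument is needed. In short, both proofs rest on the same two ingredients (Fourier transform of $\mathrm{sinc}$ is a box, Fourier transform of a chirp is a chirp) but apply them in the opposite order; yours is tidier, the paper's is more elementary to justify.
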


\begin{proof}
We carry out the computations of $\Theta_{\sigma}$ by studying first the case
in dimension $d=1$ and secondly, inspired by the former case, $d>1$.

\emph{First step: $d=1$}. By Proposition \ref{thetalp}, the function $\Theta$
is in 
\begin{equation*}
L^\infty(\mathbb{R}^2)\setminus L^p(\mathbb{R}^2)\subset \mathcal{S}^{\prime
}(\mathbb{R}^2), \quad 1\leq p<\infty,
\end{equation*}
so that the Fourier transform is meant in $\mathcal{S}^{\prime }(\mathbb{R}%
^2)$. Observe that 
\begin{equation*}
\mathcal{F} \Theta (\zeta_1,\zeta_2)=\mathcal{F}_2 \mathcal{F}%
_1\Theta(\zeta_1,\zeta_2),
\end{equation*}
where $\mathcal{F}_1$ (resp. $\mathcal{F}_2$) is the partial Fourier
transform\, with respect to the first (resp. second) variable. Indeed, for
every test function $\varphi\in\mathcal{S}(\mathbb{R}^2)$, 
\begin{equation*}
\langle \mathcal{F}\Theta,\varphi\rangle=\langle \Theta, \mathcal{F}%
^{-1}\varphi \rangle
\end{equation*}
and $\mathcal{F}^{-1}\varphi(x,\omega)=\mathcal{F}^{-1}_1\mathcal{F}%
^{-1}_2\varphi(x,\omega)=\mathcal{F}^{-1}_2\mathcal{F}^{-1}_1\varphi(x,%
\omega)$, by Fubini's Theorem.

Using 
\begin{equation*}
\mathcal{F}_{1}{\mathrm{sinc}(y_{2}\cdot )}(\zeta _{1})=\frac{1}{|y_{2}|}%
p_{1/2}(\zeta _{1}/y_{2}),\quad y_{2}\not=0,
\end{equation*}%
where $p_{1/2}(t)$ is the box function defined by $p_{1/2}(t)=1$ for $%
|t|\leq 1/2$ and $p_{1/2}(t)=0$ otherwise, we obtain, for $\zeta _{1}\zeta
_{2}\not=0$ (hence in particular $|\zeta _{1}|>0$), 
\begin{align*}
\mathcal{F}\Theta (\zeta _{1},\zeta _{2})& =\int_{\mathbb{R}}e^{-2\pi i\zeta
_{2}y_{2}}\frac{1}{|y_{2}|}p_{1/2}(\zeta
_{1}/y_{2})\,dy_{2}=\int_{|y_{2}|\geq 2|\zeta _{1}|}e^{-2\pi i\zeta
_{2}y_{2}}\frac{1}{|y_{2}|}dy_{2} \\
& =\int_{|s|\geq 2|\zeta _{1}\zeta _{2}|}e^{-2\pi is}\frac{1}{|s|}\,ds \\
& =\int_{|s|\geq 2|\zeta _{1}\zeta _{2}|}\frac{\cos (2\pi s)-i\sin (2\pi s)}{%
|s|}\,ds \\
& =\int_{|s|\geq 2|\zeta _{1}\zeta _{2}|}\frac{\cos {2\pi s}}{|s|}\,ds \\
& =2\int_{2|\zeta _{1}\zeta _{2}|}^{+\infty }\frac{\cos {2\pi s}}{s}\,ds=-2%
\mathrm{Ci}(4\pi |\zeta _{1}\zeta _{2}|),
\end{align*}%
by \eqref{cosint}, so that, since $\zeta _{1}\zeta _{2}=0$ is a set of
Lebesgue measure equal zero on $\mathbb{R}^{2}$, we can write 
\begin{equation}
\Theta _{\sigma }(\zeta _{1},\zeta _{2})=-2\mathrm{Ci}(4\pi |\zeta _{1}\zeta
_{2}|),\quad (\zeta _{1},\zeta _{2})\in \mathbb{R}^{2}.  \label{Ci}
\end{equation}%
\smallskip \noindent \emph{Second step: $d>1$}. This is a simple
generalization on the former step. For $(z_{1},z_{2}),(\zeta _{1},\zeta
_{2})\in {\mathbb{R}^{2d}}$, $d>1$, we write 
\begin{equation}
z_{i}=(z_{i}^{\prime },z_{i,d}),\,\zeta _{i}=(\zeta _{i}^{\prime },\zeta
_{i,d}),\quad z_{i}^{\prime },\zeta _{i}^{\prime }\in \mathbb{R}%
^{d-1},\,\,z_{i,d},\zeta _{i,d}\in \mathbb{R},\quad i=1,2.  \label{variables}
\end{equation}%
We decompose $\mathcal{F}\Theta =\mathcal{F}_{2d}\mathcal{F}^{\prime }%
\mathcal{F}_{1}\Theta $ where, for $\Theta =\Theta (z_{1},z_{2})$, $\mathcal{%
F}_{1}$ is the partial Fourier transform\thinspace\ with respect to the
variable $z_{1,d}$, $\mathcal{F}^{\prime }$ is the partial Fourier transform
\thinspace\ with respect the $2d-2$ variables $(z_{1}^{\prime
},z_{2}^{\prime })\in \mathbb{R}^{2d-2}$ and $\mathcal{F}_{2d}$ is the
partial Fourier transform  with respect to the last variable $z_{2,d}$. We start with computing the partial Fourier transform $\mathcal{F}_{1}$: 
\begin{align*}
\mathcal{F}_{1}\Theta (z_{1}^{\prime },\cdot ,z_{2}^{\prime },z_{2,d})(\zeta_{1,d})& =\mathcal{F}_{1}(T_{\frac{-z_{1}^{\prime }z_{2}^{\prime }}{z_{2,d}}}
\mbox{sinc}(z_{2,d}\cdot ))(\zeta_{1,d}) \\
& =e^{2\pi i\frac{\zeta_{1,d}}{z_{2,d}}z_{1}^{\prime}z_{2}^{\prime}}\frac{1}{|z_{2,d}|}\mathcal{F}_{1}(\mbox{sinc})\left(\frac{\zeta_{1,d}}{z_{2,d}}\right)  \\
& =e^{2\pi i\frac{\zeta_{1,d}}{z_{2,d}}z_{1}^{\prime }z_{2}^{\prime }}\frac{1}{|z_{2,d}|}p_{1/2}\left(\frac{\zeta_{1,d}}{z_{2,d}}\right).
\end{align*}
Using the Gaussian integrals in \cite[Appendix A, Theorem 2]{folland}) we
calculate 
\begin{equation*}
\mathcal{F}^\prime (e^{2\pi i\frac{\zeta _{1,d}}{z_{2,d}}z_{1}^{\prime
}z_{2}^{\prime }})(\zeta _{1}^{\prime },\zeta _{2}^{\prime })=\left\vert 
\frac{z_{2,d}}{\zeta _{1,d}}\right\vert ^{d-1}e^{-2\pi i\frac{z_{2,d}}{\zeta
_{1,d}}\zeta _{1}^{\prime }\zeta _{2}^{\prime }},
\end{equation*}%
so that 
\begin{align*}
\mathcal{F}\Theta (\zeta _{1},\zeta _{2})& =\mathcal{F}_{2d}\left( e^{-2\pi i%
\frac{z_{2,d}}{\zeta _{1,d}}\zeta _{1}^{\prime }\zeta _{2}^{\prime
}}\left\vert \frac{z_{2,d}}{\zeta _{1,d}}\right\vert ^{d-1}\frac{1}{|z_{2,d}|%
}p_{1/2}\left( \frac{\zeta _{1,d}}{z_{2,d}}\right) \right) (\zeta _{2,d}) \\
& =\int_{\left\vert \frac{\zeta _{1,d}}{z_{2,d}}\right\vert \leq \frac{1}{2}%
}\left\vert \frac{z_{2,d}}{\zeta _{1,d}}\right\vert ^{d-1}\frac{1}{|z_{2,d}|}%
e^{-2\pi i\frac{z_{2,d}}{\zeta _{1,d}}\zeta _{1}\zeta _{2}}\,dz_{2,d} \\
& =\int_{|s|\geq 2}e^{-2\pi is(\zeta _{1}\zeta _{2})}|s|^{d-2}\,ds,
\end{align*}%
as claimed.
\end{proof}

Notice that the second equation \eqref{C1intro} can be written%
\begin{equation*}
\Theta _{\sigma }(\zeta _{1},\zeta _{2})=\int_{|s|\geq 2}e^{-2\pi is(\zeta
_{1}\zeta _{2})}|s|^{d-2}\,ds.
\end{equation*}

\begin{corollary}
\label{Thetalinfty} We have 
\begin{equation*}
\Theta _{\sigma }\notin L_{loc}^{\infty }({\mathbb{R}^{2d}}).
\end{equation*}
\end{corollary}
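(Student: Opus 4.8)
The plan is to read the singular behaviour of $\Theta_\sigma$ directly off the explicit formula of Proposition \ref{Thetasigma}, distinguishing the dimension $d=1$, where $\Theta_\sigma$ is a genuine function that blows up logarithmically, from $d\ge 2$, where $\Theta_\sigma$ is not even locally integrable.

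For $d=1$ I would argue as follows. Splitting $\mathrm{Ci}(t)=\log t-\int_t^1\frac{\cos s-1}{s}\,ds-\int_1^{\infty}\frac{\cos s}{s}\,ds$ and using that $(\cos s-1)/s$ is integrable near the origin, one gets $\mathrm{Ci}(t)=\log t+c+o(1)\to-\infty$ as $t\to 0^+$. Hence, by \eqref{Ci}, $\Theta_\sigma(\zeta_1,\zeta_2)=-2\,\mathrm{Ci}(4\pi|\zeta_1\zeta_2|)\to+\infty$ whenever $\zeta_1\zeta_2\to 0$; since in any neighbourhood of the origin (indeed of any point of $\{\zeta_1\zeta_2=0\}$) the product $\zeta_1\zeta_2$ comes arbitrarily close to $0$, $\Theta_\sigma$ is essentially unbounded there, so $\Theta_\sigma\notin L^\infty_{loc}(\mathbb{R}^2)$.

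For $d\ge 2$ the point is that $\Theta_\sigma$ fails to be a locally integrable function. By Proposition \ref{Thetasigma}, $\Theta_\sigma(\zeta_1,\zeta_2)=G_d(\zeta_1\cdot\zeta_2)$ where $G_d:=\mathcal{F}\bigl(\chi_{\{|s|\ge 2\}}|s|^{d-2}\bigr)$ is a tempered distribution on $\mathbb{R}$. Writing $G_d=\mathcal{F}(|s|^{d-2})-\int_{-2}^{2}e^{-2\pi isu}|s|^{d-2}\,ds$ realizes $G_d$ as $\mathcal{F}(|s|^{d-2})$ plus a bounded real-analytic function; and $\mathcal{F}(|s|^{d-2})$ is a multiple of $\delta$ when $d=2$, while for $d\ge 3$ the identity $G_{d+2}=-(2\pi)^{-2}G_d''$ expresses $G_d$ as a derivative of $\delta$ (for $d$ even) or of $\log|u|$ (for $d$ odd) — in every case a distribution whose restriction to a neighbourhood of $0$ is not in $L^1_{loc}(\mathbb{R})$. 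Since $(\zeta_1,\zeta_2)\mapsto\zeta_1\cdot\zeta_2$ is a submersion away from the origin, pulling back shows $\Theta_\sigma\notin L^1_{loc}(\mathbb{R}^{2d})$, and a fortiori $\Theta_\sigma\notin L^\infty_{loc}(\mathbb{R}^{2d})$.

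The only delicate step is this last transfer: one must check that a distribution on $\mathbb{R}$ that is singular at $0$ pulls back, under $\zeta\mapsto\zeta_1\cdot\zeta_2$, to something that is not $L^1_{loc}$ near the hypersurface $\{\zeta_1\cdot\zeta_2=0\}$. This can be done either by straightening the hypersurface by a local diffeomorphism off the origin, or by the coarea formula: pairing $\Theta_\sigma$ with $\alpha(\zeta_1)\beta(\zeta_2)$ yields $\langle G_d,\rho\rangle$ with $\rho$ a bump supported near $u=0$, and a local $L^\infty$ bound $M$ for $\Theta_\sigma$ would give $|\langle G_d,\rho\rangle|\le M\|\alpha\|_1\|\beta\|_1$ for all admissible $\rho$, contradicting the singularity of $G_d$ at the origin. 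The $d=1$ case, by contrast, is immediate once the logarithmic blow-up of $\mathrm{Ci}$ is in hand.
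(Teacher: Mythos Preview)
For $d=1$ your argument is essentially the paper's: both extract the logarithmic singularity of $\mathrm{Ci}$ at $0$ --- the paper by quoting the series expansion $\mathrm{Ci}(x)=\gamma+\log x+\sum_{k\ge1}(-x^2)^k/(2k(2k)!)$, you by the integral splitting --- and conclude that $\Theta_\sigma$ blows up along $\{\zeta_1\zeta_2=0\}$.

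For $d\ge 2$ you supply what the paper omits. The paper's entire argument there is the one sentence ``$\Theta_\sigma$ is only defined as a tempered distribution,'' i.e.\ an assertion rather than a proof. Your decomposition $G_d=\mathcal{F}(|s|^{d-2})$ minus a smooth bounded correction, together with the recursion $G_{d+2}=-(2\pi)^{-2}G_d''$ reducing to the base cases $G_1\sim\log|u|$ and $G_2$ involving $\delta$, actually establishes that $G_d$ is singular at $u=0$ for every $d\ge2$; the pullback/coarea step then transfers this to $\Theta_\sigma$ on $\mathbb{R}^{2d}$. Your argument is correct and considerably more detailed than the paper's; the only caveat is that the pullback step, which you rightly flag as delicate, would need to be written out carefully for a fully rigorous proof (the coarea route you sketch does work, since testing against tensor products $\alpha(\zeta_1)\beta(\zeta_2)$ produces bumps $\rho$ that approximate the identity at $u=0$).
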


\begin{proof}
For the case $d=1$, recall that the cosine integral $\mathrm{Ci}(x)$ has the
series expansion 
\begin{equation*}
\mathrm{Ci}(x)=\gamma +\log x+\sum_{k=1}^{+\infty }\frac{(-x^{2})^{k}}{%
2k(2k)!},\quad x>0
\end{equation*}%
where $\gamma $ is the Euler--Mascheroni constant, from which our claim
easily follows.

For $d\geq 2$, $\Theta _{\sigma }$ is only defined as a tempered
distribution.
\end{proof}

\begin{corollary}
\label{bo2} The function $\Theta _{\sigma }\notin L^{p}({\mathbb{R}^{2d}})$,
for any $1\leq p\leq \infty $.
\end{corollary}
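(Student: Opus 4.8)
The plan is to read everything off the explicit description of $\Theta_\sigma$ obtained in Proposition \ref{Thetasigma}, treating separately the case $p=\infty$, the case $1\le p<\infty$ with $d=1$, and the case $1\le p<\infty$ with $d\ge2$. The split in the dimension is forced by the structure: for $d=1$ the distribution $\Theta_\sigma$ is an honest locally integrable function, while for $d\ge2$ it is not even a function near the hypersurface $\{\zeta_1\zeta_2=0\}$. The case $p=\infty$ requires nothing new, since $L^\infty(\mathbb{R}^{2d})\subseteq L^\infty_{\mathrm{loc}}(\mathbb{R}^{2d})$ and the claim is already contained in Corollary \ref{Thetalinfty}.

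Now let $d=1$ and $1\le p<\infty$. By Proposition \ref{Thetasigma} one has $\Theta_\sigma(\zeta_1,\zeta_2)=-2\,\mathrm{Ci}(4\pi|\zeta_1\zeta_2|)$, and since $\mathrm{Ci}(x)\to-\infty$ as $x\to0^+$ (Corollary \ref{Thetalinfty}) there is $\delta_0>0$ such that $|\Theta_\sigma(\zeta_1,\zeta_2)|\ge1$ whenever $0<|\zeta_1\zeta_2|<\delta_0$. The planar region $\{(\zeta_1,\zeta_2):0<|\zeta_1\zeta_2|<\delta_0\}$, lying between two branches of hyperbolas, has infinite Lebesgue measure, so $\|\Theta_\sigma\|_{L^p}^p$ is bounded below by that measure and hence is $+\infty$. (Equivalently, for fixed $\zeta_1\neq0$ the substitution $u=|\zeta_1|\zeta_2$ gives $\int_{\mathbb{R}}|\Theta_\sigma(\zeta_1,\zeta_2)|^p\,d\zeta_2=\tfrac{2^p}{|\zeta_1|}\int_{\mathbb{R}}|\mathrm{Ci}(4\pi|u|)|^p\,du$, a strictly positive — possibly infinite — constant divided by $|\zeta_1|$, and $\int_{\mathbb{R}}|\zeta_1|^{-1}\,d\zeta_1=+\infty$.)

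Finally let $d\ge2$ and $1\le p<\infty$. By Proposition \ref{Thetasigma}, $\Theta_\sigma$ is the pullback of the one-variable tempered distribution $G:=\mathcal{F}(\chi_{\{|s|\ge2\}}|s|^{d-2})\in\mathcal{S}'(\mathbb{R})$ under the map $\Phi(\zeta_1,\zeta_2)=\zeta_1\cdot\zeta_2$, which is a submersion on $\mathbb{R}^{2d}\setminus\{0\}$ (its differential is the covector $(\zeta_2,\zeta_1)$). Write $\chi_{\{|s|\ge2\}}|s|^{d-2}=|s|^{d-2}-\chi_{\{|s|<2\}}|s|^{d-2}$; the last term is bounded with compact support, so its Fourier transform is a bounded continuous function $h$ on $\mathbb{R}$, and $G=\mathcal{F}(|s|^{d-2})-h$. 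The distribution $\mathcal{F}(|s|^{d-2})$ is nonzero (being the Fourier transform of a nonzero function) and homogeneous on $\mathbb{R}$ of degree $-(d-2)-1=1-d\le-1$. Were it represented by a function in $L^1_{\mathrm{loc}}(\mathbb{R})$, that function would be homogeneous of degree $\le-1$, hence a.e.\ equal to $c_\pm|s|^{1-d}$ on $\{\pm s>0\}$, which is not integrable near the origin unless $c_+=c_-=0$, forcing $\mathcal{F}(|s|^{d-2})=0$ — a contradiction. Hence $G\notin L^1_{\mathrm{loc}}(\mathbb{R})$, the obstruction being localized at $s=0$. Now pick a point $\zeta^0$ with $\Phi(\zeta^0)=0$ and $\zeta^0\ne0$ and straighten $\Phi$ near it: in coordinates $(\tau,y)$ with $\tau=\Phi$, the distribution $\Theta_\sigma$ equals $G(\tau)$ (independent of $y$) and the Lebesgue measure is a smooth positive multiple of $d\tau\,dy$, so $\Theta_\sigma$ is not locally integrable near $\zeta^0$. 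Therefore $\Theta_\sigma\notin L^1_{\mathrm{loc}}(\mathbb{R}^{2d})$, and since $L^p(\mathbb{R}^{2d})\subseteq L^1_{\mathrm{loc}}(\mathbb{R}^{2d})$ for every $1\le p\le\infty$, we get $\Theta_\sigma\notin L^p(\mathbb{R}^{2d})$.

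The only routine ingredients are the measure computation for $d=1$ and the identification of the two Fourier transforms for $d\ge2$; the genuinely delicate point is the $d\ge2$ case, where $\Theta_\sigma$ must be handled as a distribution and one has to transport the failure of local integrability of $G$ at the origin to the hypersurface $\{\zeta_1\zeta_2=0\}$ through the pullback. For completeness, a uniform but partial alternative covering $1\le p\le2$ (all $d$) runs through Hausdorff--Young: $\Theta_\sigma\in L^p$ would force $\Theta=\mathcal{F}\Theta_\sigma\in L^{p'}$, contradicting Proposition \ref{thetalp} when $p>1$ and forcing $\Theta\in C_0(\mathbb{R}^{2d})$ when $p=1$, which is false since $\Theta(x,0)=1$; this does not reach $p>2$, so the direct reasoning above is still needed.
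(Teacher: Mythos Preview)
Your proof is correct and, for $p=\infty$ and for the case $d=1$, follows exactly the paper's route: invoke Corollary~\ref{Thetalinfty} for $p=\infty$, and for $d=1$ use that $|\mathrm{Ci}|$ is bounded below near $0$ together with the infinite measure of the region between hyperbolas. For $d\ge2$ the paper simply asserts that $\Theta_\sigma$ is only defined as a tempered distribution, not as a function, and leaves it at that; you instead give a genuine argument, decomposing $G=\mathcal{F}(\chi_{\{|s|\ge2\}}|s|^{d-2})$ as $\mathcal{F}(|s|^{d-2})$ minus a continuous bounded correction, using homogeneity to rule out local integrability of $G$ at $0$, and then transporting this to $\Theta_\sigma$ via the submersion $\Phi(\zeta_1,\zeta_2)=\zeta_1\cdot\zeta_2$. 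This is more work but actually justifies the claim the paper takes for granted. Your Hausdorff--Young remark for $1\le p\le2$ is a nice cross-check, though as you note it does not cover $p>2$.
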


\begin{proof}
The case $p=\infty $ is already treated in Corollary \ref{Thetalinfty}. For $%
d\geq 2$ again we observe that $\Theta _{\sigma }$ is not defined as
function but only as a tempered distribution. For $d=1$, $1\leq p<\infty $,
the claim follows by the expression \eqref{Ci}. Indeed, choose $0<\epsilon
<\pi /2$, then $|\mathrm{Ci}(x)|\geq |\mathrm{Ci}(\epsilon )|$, for $%
0<x<\epsilon $, so that 
\begin{align*}
\int_{\mathbb{R}^{2}}|\Theta _{\sigma }(\zeta _{1},\zeta _{2})|^{p}\,d\zeta
_{1}d\zeta _{2}& \geq 2\int_{|\zeta _{1}\zeta _{2}|<\frac{\epsilon }{4\pi }}|%
\mathrm{Ci}(4\pi |\zeta _{1}\zeta _{2}|)|^{p}\,d\zeta d\zeta _{2} \\
& \geq C_{p}meas\{(\zeta _{1},\zeta _{2})\,:\,|\zeta _{1}\zeta _{2}|<\frac{%
\epsilon }{4\pi }\}=+\infty ,
\end{align*}%
for a suitable constant $C_{p}>0$.
\end{proof}

Since $\mathcal{F} L^1\subset L^\infty$, the Wiener amalgam space $W(%
\mathcal{F} L^1, L^\infty)$ is included in $L^\infty_{loc}$. This proves our
claim:

\begin{corollary}
\label{bo1} The function $\Theta_{\sigma} \notin W(\mathcal{F} L^1,L^\infty)({%
\mathbb{R}^{2d}})$ or, equivalently, $\Theta\notin M^{1,\infty}({\mathbb{R}%
^{2d}})$.
\end{corollary}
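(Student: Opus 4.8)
The plan is to derive the statement with essentially no new computation, by combining the structural facts about $\Theta_\sigma$ already obtained with the soft embedding $W(\mathcal{F}L^1,L^\infty)\hookrightarrow L^\infty_{\mathrm{loc}}$. The first step is to make this embedding precise: since $\mathcal{F}L^1(\mathbb{R}^{2d})\hookrightarrow C_0(\mathbb{R}^{2d})\subset L^\infty(\mathbb{R}^{2d})$ by the Riemann--Lebesgue lemma, the local component of the Wiener amalgam space forces every $f\in W(\mathcal{F}L^1,L^\infty)$ to agree, on each cube of a fixed covering of $\mathbb{R}^{2d}$, with the restriction of an $\mathcal{F}L^1$-function, hence to be bounded there; thus $W(\mathcal{F}L^1,L^\infty)\subseteq L^\infty_{\mathrm{loc}}(\mathbb{R}^{2d})$ (indeed $\subseteq C(\mathbb{R}^{2d})\cap L^\infty(\mathbb{R}^{2d})$). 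Equivalently one may just use $W(\mathcal{F}L^1,L^\infty)\subseteq W(L^\infty,L^\infty)=L^\infty$.

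The second step is to invoke Corollary \ref{Thetalinfty}, according to which $\Theta_\sigma\notin L^\infty_{\mathrm{loc}}(\mathbb{R}^{2d})$: for $d=1$ this comes from the logarithmic singularity $\mathrm{Ci}(x)=\gamma+\log x+O(x^2)$ as $x\to 0^+$ together with the formula \eqref{Ci}, while for $d\geq 2$ it follows from the fact that $\Theta_\sigma$ is not representable by a locally bounded function, only by a genuine tempered distribution. Combining the two steps, if $\Theta_\sigma$ belonged to $W(\mathcal{F}L^1,L^\infty)$ it would be locally bounded, a contradiction; hence $\Theta_\sigma\notin W(\mathcal{F}L^1,L^\infty)(\mathbb{R}^{2d})$.

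The last step is the equivalence with $\Theta\notin M^{1,\infty}(\mathbb{R}^{2d})$. By \eqref{W-M} we have $\mathcal{F}(M^{1,\infty})=W(\mathcal{F}L^1,L^\infty)$, and, as already observed, $\Theta_\sigma=\mathcal{F}_\sigma\Theta=\mathcal{F}\Theta$ on $\mathbb{R}^{2d}$ because $\mathrm{sinc}$ is even, so that $\Theta\circ J=\Theta$ and $\mathcal{F}\Theta\circ J=\mathcal{F}\Theta$ since $J$ is symplectic. Therefore $\Theta\in M^{1,\infty}$ if and only if $\mathcal{F}\Theta=\Theta_\sigma\in W(\mathcal{F}L^1,L^\infty)$, which we have just ruled out. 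I do not expect a real obstacle here: the hard analysis (the explicit formula for $\Theta_\sigma$ in Proposition \ref{Thetasigma}, and Corollary \ref{Thetalinfty}) is already in place, and the only steps requiring a touch of care are stating the embedding $W(\mathcal{F}L^1,L^\infty)\hookrightarrow L^\infty_{\mathrm{loc}}$ cleanly and identifying $\Theta_\sigma$ with the ordinary Fourier transform $\mathcal{F}\Theta$ via the $J$-invariance of $\Theta$.
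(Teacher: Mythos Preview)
Your proof is correct and follows essentially the same approach as the paper: the paper's argument is the single line ``since $\mathcal{F}L^1\subset L^\infty$, the Wiener amalgam space $W(\mathcal{F}L^1,L^\infty)$ is included in $L^\infty_{\mathrm{loc}}$,'' together with Corollary~\ref{Thetalinfty}. Your write-up is more detailed (you spell out the Riemann--Lebesgue embedding and the equivalence via \eqref{W-M} and the $J$-invariance of $\Theta$), but the content is the same.
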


\section{Cohen Kernels in modulation and Wiener spaces}

In this section we focus on other members of the Cohen class, introduced by
Cohen in \cite{Cohen1}, which define, for $\tau \in \lbrack 0,1]$, the
(cross-)$\tau $-Wigner distributions 
\begin{equation}
W_{\tau }(f,g)(x,\omega )=\int_{\mathbb{R}^{d}}e^{-2\pi iy\zeta }f(x+\tau y)%
\overline{g(x-(1-\tau )y)}\,dy\quad f,g\in \mathcal{S}(\mathbb{R}^{d}).
\label{tauwig}
\end{equation}%
Such distributions enter in the definition of the $\tau -$pseudodifferential
operators as follows 
\begin{equation}
\langle \limfunc{Op}\nolimits_{\mathrm{\tau }}(a)f,g\rangle =\langle
a,W_{\tau }(g,f)\rangle \quad f,g\in \mathcal{S}(\mathbb{R}^{d}).
\label{tauweak}
\end{equation}%
It is then natural to investigate the time-frequency properties of such
kernels and compare to the corresponding Weyl and Born-Jordan ones. The
Cohen class consists of elements of the type 
\begin{equation*}
M(f,f)(x,\omega )=W(f,f)\ast \sigma
\end{equation*}%
where $\sigma \in \mathcal{S}^{\prime }({\mathbb{R}^{2d}})$ is called the
Cohen kernel. When $\sigma =\delta $, then $M(f,f)=W(f,f)$ and we come back
to the Wigner distribution. When $\sigma =\Theta _{\sigma }$, then $%
M(f,f)=Q(f,f)$, that is the Born-Jordan distribution. The $\tau $-Wigner
function $W_{\tau }(f,f)$ belongs to the Cohen class for every $\tau \in
\lbrack 0,1]$, as proved in \cite[Proposition 5.6]{bogetal}: 
\begin{equation*}
W_{\tau }(f,f)=W(f,f)\ast \sigma _{\tau },
\end{equation*}%
where 
\begin{equation*}
\sigma _{\tau }(x,\omega )=\frac{2^{d}}{|2\tau -1|^{d}}e^{2\pi i\frac{2}{%
2\tau -1}x\omega },\quad \tau \not=\frac{1}{2}
\end{equation*}%
and $\sigma _{1/2}=\delta $ (the case of the Wigner distribution, as already
observed).

In what follows we study the properties of the Cohen kernels $\sigma_{\tau }$
in the realm of modulation and Wiener amalgam spaces. As we shall see, the
Born-Jordan kernel and the Wigner one display similar time-frequency
properties and are locally worse than the kernels $\sigma_{\tau }$, $\tau\not=
1/2$.

\begin{proposition}
\label{tauW} We have, for every $\tau\in[0,1]\setminus \{1/2\}$, 
\begin{equation*}
\sigma_{\tau }\in W(\mathcal{F} L^1,L^\infty)({\mathbb{R}^{2d}})\cap
M^{1,\infty}({\mathbb{R}^{2d}}).
\end{equation*}
\end{proposition}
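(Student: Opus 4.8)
The plan is to observe that, up to the positive constant $2^{d}/|2\tau-1|^{d}$, the kernel $\sigma_{\tau}$ is the chirp $H_{c}(z_{1},z_{2})=e^{2\pi ic\,z_{1}z_{2}}$ with $c=\tfrac{2}{2\tau-1}\neq0$, so that its time-frequency content can be read off from the computations already carried out for $\Theta$. Indeed, the computation in the proof of Proposition \ref{sincSTFT} is ``local in the dilation parameter'' and goes through verbatim for any real $c\neq0$ (extending \eqref{ftchirp} to $\mathcal{F}H_{c}(\zeta_{1},\zeta_{2})=|c|^{-d}e^{-2\pi i\frac1c\zeta_{1}\zeta_{2}}$, which is legitimate since the quadratic form $z_{1}\cdot z_{2}$ on $\mathbb{R}^{2d}$ has balanced signature $(d,d)$, so that no metaplectic phase appears): one obtains, with $g(x,\omega)=e^{-\pi x^{2}}e^{-\pi\omega^{2}}$,
\begin{equation*}
|V_{g}H_{c}(z_{1},z_{2},\zeta_{1},\zeta_{2})|=\frac{1}{(c^{2}+1)^{d/2}}\,e^{-\pi\frac{c^{2}}{c^{2}+1}\left[(z_{1}-\frac1c\zeta_{2})^{2}+(z_{2}-\frac1c\zeta_{1})^{2}\right]}.
\end{equation*}

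The membership $\sigma_{\tau}\in W(\mathcal{F}L^{1},L^{\infty})(\mathbb{R}^{2d})$ then follows by repeating the argument of Proposition \ref{sincxpwiener}: the change of variables $\zeta\mapsto(z_{1}-\tfrac1c\zeta_{2},\,z_{2}-\tfrac1c\zeta_{1})$ has the $z$-independent Jacobian $|c|^{2d}$ and centers the Gaussian, so $\|V_{g}H_{c}(z,\cdot)\|_{L^{1}}=(c^{2}+1)^{d/2}$ is a constant, whence $\sup_{z}\|V_{g}\sigma_{\tau}(z,\cdot)\|_{L^{1}}<\infty$. For the inclusion $\sigma_{\tau}\in M^{1,\infty}(\mathbb{R}^{2d})$ I would pass to the Fourier side: by the formula above, $\mathcal{F}\sigma_{\tau}(\zeta_{1},\zeta_{2})=e^{-\pi i(2\tau-1)\zeta_{1}\zeta_{2}}$, again a chirp $H_{c'}$ with $c'=-\tfrac{2\tau-1}{2}\neq0$, so the previous step applied to $\mathcal{F}\sigma_{\tau}$ gives $\mathcal{F}\sigma_{\tau}\in W(\mathcal{F}L^{1},L^{\infty})(\mathbb{R}^{2d})$; since $\mathcal{F}(M^{1,\infty})=W(\mathcal{F}L^{1},L^{\infty})$ by \eqref{W-M}, we conclude $\sigma_{\tau}\in M^{1,\infty}(\mathbb{R}^{2d})$.

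The argument presents no serious obstacle; the only point that must be watched is the uniformity in $z$ of $\|V_{g}\sigma_{\tau}(z,\cdot)\|_{L^{1}_{\zeta}}$, which rests on the translation invariance of the Gaussian profile together with the $z$-independence of the Jacobian above. If one prefers to avoid extracting the short-time Fourier transform formula from the proof of Proposition \ref{sincSTFT}, one may instead note that the orthogonal substitution $(z_{1},z_{2})\mapsto\big((z_{1}+z_{2})/\sqrt{2},(z_{1}-z_{2})/\sqrt{2}\big)$ leaves both $M^{1,\infty}(\mathbb{R}^{2d})$ and $W(\mathcal{F}L^{1},L^{\infty})(\mathbb{R}^{2d})$ invariant (it commutes with the Fourier transform) and turns $\sigma_{\tau}$ into a tensor product of one-dimensional chirps $e^{\pm\pi i c w^{2}}$; the one-dimensional statement is an elementary Gaussian computation, and for the exponent $\infty$ the $M^{1,\infty}$- and $W(\mathcal{F}L^{1},L^{\infty})$-norms of a tensor product factor as products of the corresponding one-dimensional norms.
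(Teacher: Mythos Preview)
Your argument is correct, but it follows a different route from the paper's. The paper does not compute $V_{g}H_{c}$ directly; instead it invokes the dilation bound
\[
\|f(\lambda\,\cdot)\|_{W(\mathcal{F}L^{1},L^{\infty})}\leq C\,\lambda^{-2d}(\lambda^{2}+1)^{d}\|f\|_{W(\mathcal{F}L^{1},L^{\infty})}
\]
(from \cite{sugitomita2,CNJFA2008}) with $\lambda=\sqrt{|c|}$ to reduce $e^{2\pi ic\,z_{1}z_{2}}$ to the single base chirp $e^{\pm2\pi i\,z_{1}z_{2}}$, whose membership in $W(\mathcal{F}L^{1},L^{\infty})$ is taken as known (cf.\ \cite[Proposition 3.2]{cgn2}). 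The passage to $M^{1,\infty}$ via $\mathcal{F}\sigma_{\tau}(\zeta_{1},\zeta_{2})=e^{-\pi i(2\tau-1)\zeta_{1}\zeta_{2}}$ and \eqref{W-M} is the same in both approaches.

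Your route has the advantage of being entirely self-contained: it recycles the integrand in the proof of Proposition \ref{sincSTFT} (which already \emph{is} the STFT of a chirp before the $dt$-integration) and yields the explicit value $\|V_{g}H_{c}(z,\cdot)\|_{L^{1}}=(c^{2}+1)^{d/2}$, so no external dilation estimate or base-case citation is needed. The paper's route is shorter if one is willing to quote those results, and it makes the dependence on $c$ conceptually transparent as a rescaling. Your alternative via the orthogonal rotation $(z_{1},z_{2})\mapsto\big((z_{1}+z_{2})/\sqrt2,\,(z_{1}-z_{2})/\sqrt2\big)$ is also valid and gives yet a third path, reducing to tensor products of one-dimensional Gaussian chirps.
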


\begin{proof}
We exploit the dilation properties for Wiener spaces (cf.\ \cite[Lemma 3.2]%
{sugitomita2} and its generalization in \cite[Corollary 3.2]{CNJFA2008}):
for $A=\lambda I$, $\lambda>0$, 
\begin{equation}  \label{dillambda}
\|f(A\,\cdot)\|_{W(\mathcal{F}L^p,L^q)}\leq C
\lambda^{d\left(\frac1p-\frac1q-1\right)}(\lambda^2+1)^{d/2} \|f\|_{W(%
\mathcal{F}L^p,L^q)}.
\end{equation}
Using the dilation relations for Wiener amalgam spaces \eqref{dillambda} for 
$\lambda=\sqrt{t}$, $0<t<1/2$, $p=1$, $q=\infty$, we obtain 
\begin{equation*}
\|e^{\pm 2\pi i \zeta_1\zeta_2 t}\|_{W(\mathcal{F} L^1,L^\infty)}\leq C
\|e^{\pm 2\pi i \zeta_1\zeta_2 }\|_{W(\mathcal{F} L^1,L^\infty)}
\end{equation*}
with constant $C>0$ independent on the parameter $t$. For $t=\frac
2{2\tau-1} $, when $\tau>1/2$ and $t=-\frac 2{2\tau-1}$, when $0\leq \tau
<1/2$, we obtain that $\sigma_{\tau }\in W(\mathcal{F} L^1,L^\infty)({\mathbb{R}%
^{2d}})$. Now, an easy computation gives 
\begin{equation*}
\mathcal{F} \sigma_{\tau }(\zeta_1,\zeta_2)=e^{-\pi i (2\tau-1)\zeta_1\zeta_2},
\end{equation*}
so that, using $\mathcal{F} M^{1,\infty}({\mathbb{R}^{2d}})=W(\mathcal{F}
L^1,L^\infty)({\mathbb{R}^{2d}})$ and repeating the previous argument we
obtain $\sigma_{\tau }\in M^{1,\infty}({\mathbb{R}^{2d}})$ for every $\tau\in
[0,1]\setminus \{1/2\}$.
\end{proof}

The case $\tau=1/2$ is different. Indeed, $\sigma_{1/2}=\delta$ and for any
fixed $g\in\mathcal{S}({\mathbb{R}^{2d}})\setminus \{0\}$ the STFT $V_g
\delta$ is given by 
\begin{equation*}
V_g \delta (z,\zeta)=\langle \delta, M_\zeta T_z g\rangle=\overline{g(-z)},
\end{equation*}
that yields $\delta\in M^{1,\infty}({\mathbb{R}^{2d}})\setminus W(\mathcal{F}
L^1, L^\infty)({\mathbb{R}^{2d}})$.

The Born-Jordan kernel $\Theta_{\sigma}$ behaves analogously. Indeed, using
Proposition \ref{sincxpwiener} and Corollary \ref{bo1}, we obtain 
\begin{equation*}
\Theta_{\sigma} \in M^{1,\infty}({\mathbb{R}^{2d}})\setminus W(\mathcal{F}
L^1, L^\infty)({\mathbb{R}^{2d}}).
\end{equation*}
Those distributions can be used in the definition of the $\tau$%
-pseudodifferential operators

\section{Symbols in modulation spaces}

This section is focused on the proof of Theorem \ref{Charpseudo}. We first
demonstrate the sufficient boundedness conditions.

\begin{theorem}
\label{Charpseudosuff} Assume that $1\leq p,q,r_1,r_2\leq \infty$. Then the
pseudodifferential operator $\limfunc{Op}\nolimits_{\mathrm{BJ}}(a)$, from $%
\mathcal{S}(\mathbb{R}^d)$ to $\mathcal{S}^{\prime }(\mathbb{R}^d)$, having
symbol $a \in M^{p,q}(\mathbb{R}^{2d})$, extends uniquely to a bounded
operator on $\mathcal{M}^{r_1,r_2}(\mathbb{R}^d)$, with the estimate %
\eqref{stimaA} and the indices' conditions \eqref{indicitutti} and %
\eqref{indiceq}.
\end{theorem}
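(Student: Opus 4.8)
The plan is to deduce Theorem \ref{Charpseudosuff} from the already available boundedness of Weyl operators on modulation spaces, using as a bridge the convolution identity $Q(g,f)=W(g,f)\ast\Theta_\sigma$ from \eqref{BJ}. First I would observe that, for $f,g\in\mathcal S(\mathbb{R}^d)$,
\[
\langle \operatorname{Op}_{\mathrm{BJ}}(a)f,g\rangle=\langle a,W(g,f)\ast\Theta_\sigma\rangle=\langle a\ast\Theta_\sigma,\,W(g,f)\rangle=\langle \operatorname{Op}_{\mathrm{W}}(b)f,g\rangle,\qquad b:=a\ast\Theta_\sigma,
\]
the middle equality being the adjointness of convolution for the sesquilinear pairing together with the fact (read off from \eqref{sincxp} and \eqref{C1intro}) that $\Theta_\sigma$ is real and even. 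Since $a\in M^{p,q}(\mathbb{R}^{2d})\subset\mathcal S'$ while $\Theta_\sigma$ is merely a tempered distribution, the convolution must be interpreted on the symplectic Fourier side: invoking \eqref{eq10} and $\mathcal F_\sigma\Theta_\sigma=\mathcal F_\sigma^{2}\Theta=\Theta$ (the symplectic Fourier transform is an involution, since $\mathcal F_\sigma F=\mathcal F F(J\,\cdot)$ and $\mathcal F^{2}F(z)=F(-z)$), I would set $b:=\mathcal F_\sigma(\Theta\cdot\mathcal F_\sigma a)$, which is unambiguous because $\Theta\in\mathcal C^\infty(\mathbb{R}^{2d})\cap L^\infty(\mathbb{R}^{2d})$ (in fact $\Theta$ is a slowly increasing smooth multiplier). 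Thus $\operatorname{Op}_{\mathrm{BJ}}(a)=\operatorname{Op}_{\mathrm{W}}(b)$ weakly on $\mathcal S(\mathbb{R}^d)$.

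The core estimate is then $\|b\|_{M^{p,q}}\lesssim\|a\|_{M^{p,q}}$, valid for all $1\le p,q\le\infty$. Because $\mathcal F_\sigma F=(\mathcal F F)\circ J$ with $J$ as in \eqref{J}, and $J$ acts by one and the same volume-preserving linear map on the ``time'' and ``frequency'' halves of $\mathbb{R}^{2d}$, both $M^{p,q}(\mathbb{R}^{2d})$ and $W(\mathcal F L^{p},L^{q})(\mathbb{R}^{2d})$ are invariant under $F\mapsto F\circ J$; combined with \eqref{W-M}, this shows that $\mathcal F_\sigma$ is a topological isomorphism $M^{p,q}(\mathbb{R}^{2d})\to W(\mathcal F L^{p},L^{q})(\mathbb{R}^{2d})$. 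Hence $\mathcal F_\sigma a\in W(\mathcal F L^{p},L^{q})$; by Proposition \ref{sincxpwiener} we have $\Theta\in W(\mathcal F L^{1},L^{\infty})(\mathbb{R}^{2d})$, so the product property \eqref{product} gives $\Theta\cdot\mathcal F_\sigma a\in W(\mathcal F L^{p},L^{q})$ with norm $\lesssim\|\Theta\|_{W(\mathcal F L^{1},L^{\infty})}\,\|a\|_{M^{p,q}}$, and applying $\mathcal F_\sigma$ once more returns $b\in M^{p,q}(\mathbb{R}^{2d})$ with the claimed bound. It is precisely here that the analysis of $\Theta$ carried out in Section 3 enters.

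To conclude, I would invoke the boundedness of Weyl operators with symbol in $M^{p,q}$: under exactly the constraints \eqref{indicitutti} and \eqref{indiceq}, $\operatorname{Op}_{\mathrm{W}}(b)$ extends to a bounded operator on $\mathcal M^{r_1,r_2}(\mathbb{R}^d)$ with $\|\operatorname{Op}_{\mathrm{W}}(b)f\|_{\mathcal M^{r_1,r_2}}\lesssim\|b\|_{M^{p,q}}\|f\|_{\mathcal M^{r_1,r_2}}$ — this is \cite[Theorem 4.3]{toft1} (compare also \cite[Theorem 5.2, Proposition 5.3]{cordero2} and \cite[Theorem 1.1]{cordero3}), and may alternatively be re-derived by writing the STFT of $\operatorname{Op}_{\mathrm{W}}(b)f$ as a convolution to which the modulation-space convolution relations of Proposition \ref{mconvmp} apply, these relations being exactly what forces the indices \eqref{indicitutti}--\eqref{indiceq}. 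Combining this with the symbol estimate and the identity of the first paragraph yields \eqref{stimaA} first for $f\in\mathcal S(\mathbb{R}^d)$, and then for all $f\in\mathcal M^{r_1,r_2}(\mathbb{R}^d)$ since $\mathcal S(\mathbb{R}^d)$ is by construction dense there, with the extension automatically unique. The hard part will be the distributional bookkeeping in the identification $\operatorname{Op}_{\mathrm{BJ}}(a)=\operatorname{Op}_{\mathrm{W}}(a\ast\Theta_\sigma)$: one must check that interpreting the badly behaved convolution as $\mathcal F_\sigma(\Theta\cdot\mathcal F_\sigma a)$ is compatible with the weak definition \eqref{BJOP} — for which the pairing $\langle a,Q(g,f)\rangle$ is rewritten as $\langle \mathcal F_\sigma a,\Theta\cdot\mathcal F_\sigma W(g,f)\rangle$ with $\mathcal F_\sigma W(g,f)\in\mathcal S(\mathbb{R}^{2d})$ — and that it indeed places the Weyl symbol back in $M^{p,q}$; the remaining steps are soft, resting only on the Wiener amalgam calculus recalled in Sections 2 and 3.
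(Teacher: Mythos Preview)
Your proposal is correct and follows essentially the same strategy as the paper: identify $\operatorname{Op}_{\mathrm{BJ}}(a)=\operatorname{Op}_{\mathrm{W}}(a\ast\Theta_\sigma)$, show that $a\mapsto a\ast\Theta_\sigma$ is bounded on $M^{p,q}(\mathbb{R}^{2d})$, and then quote the known sufficient conditions for Weyl operators (\cite[Theorem 5.2]{cordero2}). The only difference is in how the core estimate $\|a\ast\Theta_\sigma\|_{M^{p,q}}\lesssim\|a\|_{M^{p,q}}$ is obtained: the paper (Proposition~\ref{Abound}) observes that $\Theta_\sigma=\mathcal F_\sigma\Theta\in M^{1,\infty}$ via Proposition~\ref{sincxpwiener} and then applies the modulation-space convolution relation $M^{p,q}\ast M^{1,\infty}\hookrightarrow M^{p,q}$ from Proposition~\ref{mconvmp} directly, whereas you pass to the symplectic Fourier side and use the Wiener amalgam product property \eqref{product} with $\Theta\in W(\mathcal F L^1,L^\infty)$. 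These two arguments are Fourier-dual to one another and equally short; your version has the minor advantage of making the distributional meaning of $a\ast\Theta_\sigma$ explicit from the outset.
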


The result relies on a thorough understanding the action of the mapping 
\begin{equation}  \label{map00}
A: \, a\longmapsto a\ast\Theta_{\sigma},
\end{equation}
which gives the Weyl symbol of an operator with Born-Jordan symbol $a$, on
modulation space s.

\begin{prop}
\label{Abound} For every $1\leq p,q\leq\infty$, the mapping $A$ in %
\eqref{map00}, defined initially on $\mathcal{S}^{\prime }({\mathbb{R}^{2d}}%
) $, restricts to a linear continuous map on $M^{p,q} ({\mathbb{R}^{2d}})$,
i.e., there exists a constant $C>0$ such that 
\begin{equation}  \label{contA}
\|Aa\|_{M^{p,q}}\leq C\|a\|_{M^{p,q}}.
\end{equation}
\end{prop}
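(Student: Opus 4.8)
The plan is to reduce the boundedness of the convolution map $A\colon a\mapsto a\ast\Theta_\sigma$ on $M^{p,q}(\mathbb{R}^{2d})$ to a pointwise multiplication statement on the Fourier side, and then exploit the product property \eqref{product} of Wiener amalgam spaces together with Proposition \ref{sincxpwiener}. Concretely, I would first pass to the symplectic Fourier transform. By \eqref{eq10} we have $\mathcal{F}_\sigma(a\ast\Theta_\sigma)=\mathcal{F}_\sigma a\cdot\mathcal{F}_\sigma\Theta_\sigma$, and since $\mathcal{F}_\sigma F(\zeta)=\mathcal{F}F(J\zeta)$ is just the ordinary Fourier transform composed with the linear change of variables $J$, while $\Theta_\sigma=\mathcal{F}\Theta$ (as noted before Proposition \ref{Thetasigma}, using $\mathcal{F}\Theta(J\zeta)=\mathcal{F}\Theta(\zeta)$ since $\Theta$ is $J$-invariant), we get that $A$ is conjugate, via $\mathcal{F}_\sigma$, to the operator of pointwise multiplication by $\mathcal{F}_\sigma\Theta_\sigma=\mathcal{F}(\mathcal{F}\Theta)=\Theta^\vee=\Theta$ (the last equality because $\Theta$ is even). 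In other words, $\mathcal{F}_\sigma(Aa)=\Theta\cdot\mathcal{F}_\sigma a$.

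Next I would record how $\mathcal{F}_\sigma$ acts on modulation and Wiener amalgam spaces. Since $\mathcal{F}_\sigma = \mathcal{F}\circ(\text{composition with }J)$ and $J\in Sp(2d,\mathbb{R})$ is in particular an invertible linear map with $|\det J|=1$, composition with $J$ is bounded on every $M^{p,q}$ (modulation-space norms transform in a controlled way under invertible linear changes of variables — here $J^2=-I$, so the inverse is again composition with a conjugate of $J$), and $\mathcal{F}$ maps $M^{p,q}(\mathbb{R}^{2d})$ isomorphically onto $W(\mathcal{F}L^p,L^q)(\mathbb{R}^{2d})$ by \eqref{W-M}. Hence it suffices to show that multiplication by $\Theta$ is bounded on $W(\mathcal{F}L^p,L^q)(\mathbb{R}^{2d})$. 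But this is immediate from \eqref{product}: we know $\Theta\in W(\mathcal{F}L^1,L^\infty)(\mathbb{R}^{2d})$ by Proposition \ref{sincxpwiener}, and \eqref{product} says precisely that $f\in W(\mathcal{F}L^1,L^\infty)$ and $g\in W(\mathcal{F}L^p,L^q)$ imply $fg\in W(\mathcal{F}L^p,L^q)$, with the corresponding norm estimate $\|fg\|_{W(\mathcal{F}L^p,L^q)}\lesssim\|f\|_{W(\mathcal{F}L^1,L^\infty)}\|g\|_{W(\mathcal{F}L^p,L^q)}$. Chaining the three bounded maps ($\mathcal{F}_\sigma$, multiplication by $\Theta$, $\mathcal{F}_\sigma^{-1}$) yields \eqref{contA} with $C\lesssim\|\Theta\|_{W(\mathcal{F}L^1,L^\infty)}$.

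Two points require a little care. First, one should check that $A$ is initially well defined on $\mathcal{S}'(\mathbb{R}^{2d})$, i.e.\ that $a\ast\Theta_\sigma$ makes sense — this follows because $\Theta=\mathrm{sinc}\in\mathcal{O}_C$ is a convolutor (smooth with at most polynomial growth, in fact bounded), so on the Fourier side multiplication by $\Theta\in L^\infty\cap C^\infty$ sends $\mathcal{S}'$ to $\mathcal{S}'$ continuously; equivalently $\Theta_\sigma\in\mathcal{S}'$ convolves with $\mathcal{S}$ into $\mathcal{S}$. Second, the statement involves the non-closed spaces $M^{p,q}$ (not the closures $\mathcal{M}^{p,q}$), so no density argument is needed: the estimate is proved directly via the three-step factorization above, and continuity on all of $M^{p,q}$ is automatic. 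The main (and only real) obstacle is the bookkeeping in the first paragraph — getting the interplay between $\mathcal{F}$, $\mathcal{F}_\sigma$, the symplectic matrix $J$, and the parity/$J$-invariance of $\Theta$ exactly right so that the multiplier on the Wiener-amalgam side is genuinely $\Theta$ (and not, say, $\mathcal{F}^2\Theta$ or $\Theta(J\cdot)$, which would anyway be in the same Wiener amalgam space, so even a small error here is harmless for the conclusion).
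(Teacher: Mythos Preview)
Your proof is correct and morally equivalent to the paper's, but the two arguments are organized on opposite sides of the Fourier transform. The paper stays on the modulation-space side: it observes (via Proposition~\ref{sincxpwiener} and $\mathcal{F}_\sigma W(\mathcal{F}L^1,L^\infty)=M^{1,\infty}$) that $\Theta_\sigma\in M^{1,\infty}(\mathbb{R}^{2d})$, and then invokes the convolution relations of Proposition~\ref{mconvmp} in the form $M^{p,q}\ast M^{1,\infty}\hookrightarrow M^{p,q}$ to conclude. You instead pass to the symplectic Fourier side, reduce $A$ to multiplication by $\Theta$ on $W(\mathcal{F}L^p,L^q)$, and apply the product property~\eqref{product}. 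The two routes are Fourier-dual to one another; the paper's is marginally shorter because it quotes the ready-made convolution estimate, while yours is a bit more self-contained since it avoids the external reference to \cite{CG02} and uses only the multiplication property already recalled in the preliminaries. Your bookkeeping with $J$ and the parity of $\Theta$ is fine (and, as you note, any slip there would be harmless since $\Theta\circ J=\Theta$).
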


\begin{proof}
By Proposition \eqref{sincxpwiener}, the function $\Theta\in W(\mathcal{F}
L^1,L^\infty)({\mathbb{R}^{2d}})$. Its symplectic Fourier transform \, $%
\Theta_{\sigma}$ belongs to $\mathcal{F}_{\sigma} W(\mathcal{F} L^1,L^\infty)({%
\mathbb{R}^{2d}})= M^{1,\infty}({\mathbb{R}^{2d}})$. Now, for every $1\leq
p,q\leq\infty$, the convolution relations for modulation space s %
\eqref{mconvm} give 
\begin{equation*}
M^{p,q}({\mathbb{R}^{2d}})\ast M^{1,\infty}({\mathbb{R}^{2d}})
\hookrightarrow M^{p,q}({\mathbb{R}^{2d}})
\end{equation*}
and this shows the claim \eqref{contA}.
\end{proof}

\begin{proof}[Proof of Theorem \protect\ref{Charpseudosuff}.]
Assume $a \in M^{p,q}(\mathbb{R}^{2d})$, then Proposition \ref{Abound}
proves that $Aa= a\ast\Theta_{\sigma} \in M^{p,q}(\mathbb{R}^{2d})$ as well.
We next write $\limfunc{Op}\nolimits_{\mathrm{BJ}}(a)=\limfunc{Op}\nolimits_{%
\mathrm{W}}(Aa)$ and use the sufficient conditions for Weyl operators in 
\cite[Theorem 5.2]{cordero2}: if the Weyl symbol $Aa$ is in $M^{p,q}(\mathbb{%
R}^{2d})$, then $\limfunc{Op}\nolimits_{\mathrm{W}}(Aa)$ extends to a
bounded operator on $\mathcal{M}^{r_1,r_2}(\mathbb{R}^d)$, with 
\begin{equation*}
\|\limfunc{Op}\nolimits_{\mathrm{BJ}}(a) f\|_{\mathcal{M}^{r_1,r_2}}=\|%
\limfunc{Op}\nolimits_{\mathrm{W}}(Aa)f\|_{\mathcal{M}^{r_1,r_2}} \lesssim
\|Aa\|_{ M^{p,q}}\|f\|_{\mathcal{M}^{r_1,r_2}}
\end{equation*}
where the indices $r_1,r_2,p,q$ satisfy \eqref{indicitutti} and %
\eqref{indiceq}. The inequality \eqref{contA} then provides the claim.
\end{proof}

The necessary conditions of Theorem \ref{Charpseudo} require some
preliminaries.

We reckon the adjoint operator $\limfunc{Op}\nolimits_{\mathrm{BJ}}(a)^*$ of
a Born-Jordan operator $\limfunc{Op}\nolimits_{\mathrm{BJ}}(a)$ using the
connection with Weyl operators. Recall that $\limfunc{Op}\nolimits_{\mathrm{W}}(b)^\ast= \limfunc{Op}\nolimits_{\mathrm{W}}(\bar{b})$ \cite{hormander-book}, so that 
\begin{equation*}
\limfunc{Op}\nolimits_{\mathrm{BJ}}(a)^*=\limfunc{Op}\nolimits_{\mathrm{W}}(a\ast\Theta_{\sigma})^\ast=\limfunc{Op}\nolimits_{\mathrm{W}}(\overline{a\ast\Theta_{\sigma}})=\limfunc{Op}\nolimits_{\mathrm{W}}(\bar{a}\ast\bar{\Theta_{\sigma}})=\limfunc{Op}\nolimits_{\mathrm{W}}(\bar{a}\ast\Theta_{\sigma})=\limfunc{Op}\nolimits_{\mathrm{BJ}}(\bar{a})
\end{equation*}
because $\Theta$ is an even real-valued function. Hence the adjoint of a
Born-Jordan operator $\limfunc{Op}\nolimits_{\mathrm{BJ}}(a)$ with symbol $a$
is the Born-Jordan operator having symbol $\bar{a}$ (the complex-conjugate
of $a$). This nice property is the key argument for the following auxiliary
result, already obtained for the case of Weyl operators in \cite[Lemma 5.1]%
{cordero2}. The proof uses the same pattern as the former result and hence
is omitted.

\begin{lemma}
\label{dualita2} Suppose that, for some $1\leq p,q, r_1,r_2\leq\infty$, the
following estimate holds: 
\begin{equation*}
\|\limfunc{Op}\nolimits_{\mathrm{BJ}}(a) f\|_{M^{r_1,r_2}}\leq
C\|a\|_{M^{p,q}}\|f\|_{M^{r_1,r_2}},\ \ \forall a\in \mathcal{S}(\mathbb{R}%
^{2d}),\ \forall f\in \mathcal{S}(\mathbb{R}^d).
\end{equation*}
Then the same estimate is satisfied with $r_1,r_2$ replaced by $r_1^{\prime
},r_2^{\prime }$ (even if $r_1=\infty$ or $r_2=\infty$).
\end{lemma}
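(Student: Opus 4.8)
The plan is to mimic the proof of the analogous statement for Weyl operators in \cite[Lemma 5.1]{cordero2}, which relies only on two ingredients: a duality relation for the operator norm and the fact that the adjoint of a Born-Jordan operator $\limfunc{Op}\nolimits_{\mathrm{BJ}}(a)$ is again a Born-Jordan operator with symbol $\bar a$ (established just above via the identity $\limfunc{Op}\nolimits_{\mathrm{BJ}}(a)^\ast=\limfunc{Op}\nolimits_{\mathrm{BJ}}(\bar a)$, using that $\Theta$ is even and real-valued). So the argument should go through \emph{verbatim} with ``Weyl'' replaced by ``Born-Jordan''.

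Concretely, suppose the estimate $\|\limfunc{Op}\nolimits_{\mathrm{BJ}}(a)f\|_{M^{r_1,r_2}}\leq C\|a\|_{M^{p,q}}\|f\|_{M^{r_1,r_2}}$ holds for all $a\in\mathcal{S}(\mathbb{R}^{2d})$ and $f\in\mathcal{S}(\mathbb{R}^d)$. First I would fix $a\in\mathcal{S}(\mathbb{R}^{2d})$ and $f,g\in\mathcal{S}(\mathbb{R}^d)$ and write, using the duality pairing $M^{r_1,r_2}\times M^{r_1',r_2'}\to\mathbb{C}$ recalled in the Preliminaries,
\begin{equation*}
|\langle \limfunc{Op}\nolimits_{\mathrm{BJ}}(a)f,g\rangle|=|\langle f,\limfunc{Op}\nolimits_{\mathrm{BJ}}(a)^\ast g\rangle|=|\langle f,\limfunc{Op}\nolimits_{\mathrm{BJ}}(\bar a)g\rangle|\leq \|f\|_{M^{r_1,r_2}}\|\limfunc{Op}\nolimits_{\mathrm{BJ}}(\bar a)g\|_{M^{r_1',r_2'}}.
\end{equation*}
Now by the hypothesis applied to the symbol $\bar a$ (note $\|\bar a\|_{M^{p,q}}=\|a\|_{M^{p,q}}$, since conjugation is an isometry on $M^{p,q}$, as $|V_g\bar a(x,\omega)|=|V_{\bar g}a(x,-\omega)|$ and the $M^{p,q}$-norm is window-independent) we would get $\|\limfunc{Op}\nolimits_{\mathrm{BJ}}(\bar a)g\|_{M^{r_1,r_2}}\leq C\|a\|_{M^{p,q}}\|g\|_{M^{r_1,r_2}}$ — but here the roles of $(r_1,r_2)$ and $(r_1',r_2')$ must be handled carefully, and the point is precisely to \emph{deduce} the primed estimate, so one argues by taking the supremum over $g$ with $\|g\|_{M^{r_1,r_2}}\leq 1$: this yields $\|\limfunc{Op}\nolimits_{\mathrm{BJ}}(a)f\|_{M^{r_1',r_2'}}\leq C\|a\|_{M^{p,q}}\|f\|_{M^{r_1,r_2}}$, and then a second application of the same duality step (swapping which operator sits inside the pairing) upgrades the right-hand side norm of $f$ from $M^{r_1,r_2}$ to $M^{r_1',r_2'}$. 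The density of $\mathcal{S}$ and the continuity of the sesquilinear pairing let one pass from Schwartz functions to the full modulation spaces.

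The only genuinely delicate point — and the reason the authors say the proof ``uses the same pattern'' — is bookkeeping the two successive duality inversions so that both indices get primed \emph{simultaneously} (one inversion on the input side, one on the output side), while keeping track of the cases $r_1=\infty$ or $r_2=\infty$, where $M^{r_1',r_2'}$ is a closed subspace issue and the pairing is with $\mathcal{M}^{r_1,r_2}$ rather than $M^{r_1,r_2}$; this is exactly why the statement is phrased for $a,f$ Schwartz, so that no closure subtlety intervenes and the estimate is purely about the bilinear form. Since this is routine and literally identical to \cite[Lemma 5.1]{cordero2} once $\limfunc{Op}\nolimits_{\mathrm{BJ}}(a)^\ast=\limfunc{Op}\nolimits_{\mathrm{BJ}}(\bar a)$ is in hand, I would simply cite that proof and omit the details, as the authors do.
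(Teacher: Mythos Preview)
Your proposal is correct and follows exactly the paper's approach: the paper omits the proof entirely, citing \cite[Lemma 5.1]{cordero2} and the adjoint identity $\limfunc{Op}\nolimits_{\mathrm{BJ}}(a)^\ast=\limfunc{Op}\nolimits_{\mathrm{BJ}}(\bar a)$, which is precisely what you do. One small cleanup: in your displayed inequality you should pair $f$ with the $M^{r_1',r_2'}$ norm (not $M^{r_1,r_2}$), i.e.\ $|\langle f,\limfunc{Op}\nolimits_{\mathrm{BJ}}(\bar a)g\rangle|\leq \|f\|_{M^{r_1',r_2'}}\|\limfunc{Op}\nolimits_{\mathrm{BJ}}(\bar a)g\|_{M^{r_1,r_2}}$, after which the hypothesis applies directly to the second factor and a single supremum over $\|g\|_{M^{r_1,r_2}}\leq 1$ gives the primed estimate in one step---no ``second duality inversion'' is needed.
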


The above instruments let us show the necessity of \eqref{indicitutti} and %
\eqref{stimanew}.

\begin{theorem}
\label{1-2} Suppose that, for some $1\leq p,q,r_1,r_2\leq\infty$, $C>0$ the
estimate 
\begin{equation}  \label{stimamod}
\|\limfunc{Op}\nolimits_{\mathrm{BJ}}(a) f\|_{M^{r_1,r_2}}\leq C
\|a\|_{M^{p,q}}\|f\|_{M^{r_1,r_2}}\qquad \forall a\in\mathcal{S}(\mathbb{R}%
^{2d}),\ f\in\mathcal{S}(\mathbb{R}^d)
\end{equation}
holds. Then the constraints in \eqref{indicitutti} and \eqref{stimanew} must
hold.
\end{theorem}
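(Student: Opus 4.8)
The plan is to test the boundedness estimate \eqref{stimamod} against a carefully chosen family of symbols and input functions, namely dilated Gaussians, and then read off the necessary constraints on the indices from the asymptotics of the resulting norms as the dilation parameter degenerates. Since $\limfunc{Op}\nolimits_{\mathrm{BJ}}(a)=\limfunc{Op}\nolimits_{\mathrm{W}}(a\ast\Theta_\sigma)$, and since $\Theta_\sigma$ acts by pointwise multiplication with $\Theta$ on the Fourier side (by \eqref{eq10}), the natural move is to pick $f=\varphi_\lambda$ and compute $\limfunc{Op}\nolimits_{\mathrm{BJ}}(a)\varphi_\mu$ via the weak definition \eqref{BJOP}: $\langle \limfunc{Op}\nolimits_{\mathrm{BJ}}(a)\varphi_\mu,\varphi_\lambda\rangle=\langle a, Q(\varphi_\lambda,\varphi_\mu)\rangle=\langle \hat a, \mathcal{F}Q(\varphi_\lambda,\varphi_\mu)\rangle$, and $\mathcal{F}Q(\varphi_\lambda,\varphi_\mu)=\Theta\cdot\mathcal{F}W(\varphi_\lambda,\varphi_\mu)$, for which we have the explicit Gaussian times chirp expression coming from the Corollary after Lemma \ref{l1}. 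Choosing $a$ itself to be (a translate/modulation of) a Gaussian, or more simply bounding $\|a\|_{M^{p,q}}$ from below and the pairing from above/below, lets us convert \eqref{stimamod} into a scalar inequality between powers of $\lambda$ (and $\mu$).

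Concretely, I would proceed in the following steps. \textbf{Step 1.} Recover \eqref{indicitutti}: this is the same constraint as for Weyl operators, so it should follow by the same test-function argument as in \cite[Theorem 5.3 or Prop.\ 5.3]{cordero2} — take $a$ a fixed Schwartz function and $f=\varphi_\lambda$, use Lemma \ref{lemma5.2-zero} for the asymptotics of $\|\varphi_\lambda\|_{M^{r_1,r_2}}$ as $\lambda\to 0^+$ and $\lambda\to+\infty$, and match exponents; alternatively deduce it from the fact that $M^{p,q}\ni a\mapsto\limfunc{Op}\nolimits_{\mathrm{BJ}}(a)$ bounded on $L^2=M^{2,2}$ forces $M^{p,q}\hookrightarrow M^{\infty,2}$ or the like. \textbf{Step 2.} For \eqref{stimanew}, use Lemma \ref{dualita2} to reduce to proving $\max\{1/r_1,1/r_2\}\leq 1/p+1/q$; indeed once this is established for $(r_1,r_2)$ it also holds for $(r_1',r_2')$, and the four quantities in \eqref{stimanew} are exactly $1/r_1,1/r_2,1/r_1',1/r_2'$. \textbf{Step 3.} Plug $f=\varphi_\mu$ into \eqref{stimamod}, estimate $\|\limfunc{Op}\nolimits_{\mathrm{BJ}}(a)\varphi_\mu\|_{M^{r_1,r_2}}$ from below by testing against a suitable $\varphi_\lambda$ (using that $M^{r_1,r_2}$ and $M^{r_1',r_2'}$ are in duality), express the pairing via \eqref{cfwfflsig} — which has the Gaussian $\tfrac{1}{(\lambda+1)^{d/2}}e^{-\pi\lambda/(\lambda+1)\zeta_1^2}e^{-\pi/(\lambda+1)\zeta_2^2}$ times the chirp $e^{\pi i\frac{\lambda-1}{\lambda+1}\zeta_1\zeta_2}$ — multiplied by $\Theta(\zeta)=\mathrm{sinc}(\zeta_1\zeta_2)$, pair with $\hat a$ for a judicious Gaussian $a$, and compute the resulting Gaussian integral in closed form. \textbf{Step 4.} Optimize over $\lambda,\mu$ (sending them to $0$ or $\infty$ at matched rates, exploiting the exact localization of Gaussians as in \cite{Lieb} and Lemma \ref{lemma5.2-zero}) so that the $\mathrm{sinc}$ factor stays bounded below on the bulk of the Gaussian mass, and extract the inequality \eqref{stimanew} by comparing the powers of the dilation parameter on the two sides.

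The main obstacle is \textbf{Step 3–4}: the presence of the $\mathrm{sinc}(\zeta_1\zeta_2)$ factor (rather than the clean exponential $e^{\pi i\frac{\lambda-1}{\lambda+1}\zeta_1\zeta_2}$ one gets for Weyl operators) means the relevant integral is no longer a pure Gaussian integral, so one must control $\int \mathrm{sinc}(\zeta_1\zeta_2)\,e^{-\text{quadratic form}(\zeta)}\,d\zeta$ and show it does not decay too fast. The right regime is where the Gaussian concentrates near $\zeta_1\zeta_2=0$ (e.g.\ $\lambda\to 0$ so that $\zeta_1$ is squeezed), on which $\mathrm{sinc}(\zeta_1\zeta_2)\approx 1$, so the integral behaves like the Weyl one up to a constant; one then has to verify carefully that this constant is bounded below uniformly and that no extra loss in the power of $\lambda$ is incurred. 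Once the scalar asymptotic inequality is in hand, matching the exponents of the dilation parameter against $\|\varphi_\mu\|_{M^{r_1,r_2}}$, $\|\varphi_\lambda\|_{M^{r_1',r_2'}}$ (Lemma \ref{lemma5.2-zero}) and $\|a\|_{M^{p,q}}$ (a fixed finite constant, or Lemma \ref{lemma5.2-zero} again if $a$ is also dilated) yields \eqref{stimanew}, and the ``which is \eqref{indiceq} when $p=\infty$'' remark is then immediate since $1/p+1/q=1/q$ there.
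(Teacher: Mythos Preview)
Your overall strategy --- test \eqref{stimamod} on dilated Gaussians, invoke Lemma~\ref{dualita2} to halve the work, and match powers of the dilation parameter --- is the same as the paper's. The execution in Steps~3--4, however, diverges from the paper in a way that leaves a real gap.

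The paper does not compute scalar pairings $\langle a,Q(f,g)\rangle$ for a specific Gaussian $a$. Instead it first \emph{dualizes in $a$}: the hypothesis \eqref{stimamod} is equivalent to
\[
\|Q(f,g)\|_{M^{p',q'}}\lesssim \|f\|_{M^{r_1,r_2}}\|g\|_{M^{r_1',r_2'}},
\]
and then bounds the left-hand side \emph{from below as a norm}, not as an integral. The mechanism is: pass to the symplectic Fourier side where $Q$ becomes $\Theta\cdot\mathcal{F}_\sigma W$; use the cutoff trick of Lemma~\ref{lemma5.1} (multiply by $\chi(\zeta_1\zeta_2)$ supported where $\Theta$ is invertible) together with the product property \eqref{product} to peel off $\Theta$; and then apply the sharp dilation asymptotics of Lemma~\ref{lemma5.2} in $W(\mathcal{F}L^{p'},L^{q'})$. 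This is what makes the exponents $p',q'$ appear. In your plan, if $a$ is a \emph{fixed} Gaussian then $\|a\|_{M^{p,q}}$ is just a constant and the resulting scalar inequality carries no information about $p$ or $q$ --- you would not recover the right-hand side $1/p+1/q$ of \eqref{stimanew}. You acknowledge the option of dilating $a$ as well, but then you are effectively recomputing the $M^{p',q'}$ norm by hand with a three-parameter family, and the ``closed form'' Gaussian integral you hope for is spoiled by the $\mathrm{sinc}$ factor; the paper's cutoff-and-norm argument avoids this entirely.

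Two further points where the paper's route differs from yours. For \eqref{indicitutti} the paper takes $f=g=\varphi_\lambda$ and sends $\lambda\to+\infty$ (not $f$ fixed). For the constraint $1/r_2\le 1/p+1/q$ the paper does \emph{not} introduce a second dilation parameter $\mu$; instead it uses the metaplectic identity $\mathcal{F}^{-1}\limfunc{Op}\nolimits_{\mathrm{BJ}}(a)\mathcal{F}=\limfunc{Op}\nolimits_{\mathrm{BJ}}(a\circ J)$ (which relies on $\Theta_\sigma\circ J=\Theta_\sigma$) to convert \eqref{stimamod} into the same estimate on $W(\mathcal{F}L^{r_1},L^{r_2})$, whereupon the $r_1$ argument repeats verbatim with $r_2$ in place of $r_1$. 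Your two-parameter optimization over $(\lambda,\mu)$ may ultimately reach the same conclusions, but the paper's reductions are cleaner and sidestep the delicate uniform-lower-bound issue for the $\mathrm{sinc}$--Gaussian integral that you flag as the main obstacle.
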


\begin{proof}
The estimate \eqref{stimamod} can be written as 
\begin{equation*}
|\langle a,Q(f,g)\rangle |\leq C\Vert a\Vert _{M^{p,q}}\Vert f\Vert
_{M^{r_{1},r_{2}}}\Vert g\Vert _{M^{r_{1}^{\prime },r_{2}^{\prime }}}\qquad
\forall a\in \mathcal{S}(\mathbb{R}^{2d}),\ f,g\in \mathcal{S}(\mathbb{R}%
^{d})
\end{equation*}%
which is equivalent to 
\begin{equation*}
\Vert Q(f,g)\Vert _{M^{p^{\prime },q^{\prime }}}\leq C\Vert f\Vert
_{M^{r_{1},r_{2}}}\Vert g\Vert _{M^{r_{1}^{\prime },r_{2}^{\prime }}}\qquad
\forall f,g\in \mathcal{S}(\mathbb{R}^{d}).
\end{equation*}%
Now, one should test this estimate on families of functions $f_{\lambda }$, $%
g_{\lambda }$ such that $Q(f_{\lambda },g_{\lambda })$ is concentrated
inside the hyperbola $|x\cdot \omega |<1$ (say), see Figure \ref{figura},
where $\theta \asymp 1$, so that the left-hand side is comparable to $\Vert
W(f_{\lambda },g_{\lambda })\Vert _{_{M^{p^{\prime },q^{\prime }}}}$ and can
be estimated from below.

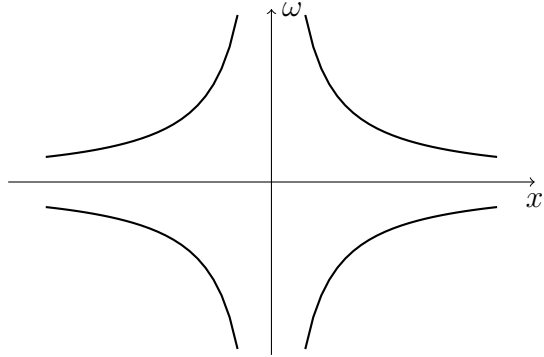
\begin{figure}[b]
\begin{tikzpicture}
\draw[->] (-3.5,0) -- (3.5,0) node[below]{$x$};
\draw[->] (0,-2.3) -- (0,2.3) node[right]{$\omega$};
\draw[thick, domain=0.45:3] plot (\x, {1/\x}) node[above] {};
\draw[thick, domain=0.45:3] plot (\x, {-1/\x}) node[above]{};
\draw[thick, domain=-3:-0.45] plot (\x, {1/\x}) node[above]{};
\draw[thick, domain=-3:-0.45] plot (\x, {-1/\x}) node[above]{};
\end{tikzpicture}
\caption{The region $|x\cdot\protect\omega|<1$ ($d=1$).}
\label{figura}
\end{figure}

The choice $f_{\lambda}(x)=g_{\lambda}(x)=e^{-\pi\lambda|x|^2}$, provides the
estimate \eqref{indicitutti} when $\lambda\to+\infty$. Indeed in this case
we argue exactly as in the proof of \cite[Theorem 1.4]{cgn2}. We recall this
pattern, useful also for other cases. By \eqref{eqa0-zero} we obtain the
estimate 
\begin{equation}  \label{ef1}
\|\varphi_{\lambda}\|_{M^{r_1,r_2}}\|\varphi_{\lambda}\|_{M^{r^{\prime
}_1,r^{\prime }_2}}\lesssim \lambda^{-\frac d {2r^{\prime }_2}}
\lambda^{-\frac d{2r_2}}.
\end{equation}
We gauge from below the norm $\|Q(\varphi_{\lambda},\varphi_{\lambda})\|_{M^{p^{%
\prime },q^{\prime }}}$ as follows. By taking the symplectic Fourier
transform and using Lemma \ref{lemma5.1} and the product property %
\eqref{product} we have 
\begin{align*}
\|Q(\varphi_{\lambda},\varphi_{\lambda})\|_{M^{p^{\prime },q^{\prime
}}}&=\|\Theta_{\sigma} \ast W(\varphi_{\lambda},\varphi_{\lambda})\|_{M^{p^{\prime
},q^{\prime }}} \\
&\asymp \|\Theta \mathcal{F}_{\sigma}[W(\varphi_{\lambda},\varphi_{\lambda})]\|_{W(%
\mathcal{F} L^{p^{\prime }},L^{q^{\prime }})} \\
&\gtrsim \|\Theta(\zeta_1,\zeta_2) \chi(\zeta_1\zeta_2)\mathcal{F}_{\sigma}[
W(\varphi_{\lambda},\varphi_{\lambda})]\|_{W(\mathcal{F} L^{p^{\prime
}},L^{q^{\prime }})}
\end{align*}
for any $\chi\in C^\infty_c(\mathbb{R})$. Choosing $\chi$ supported in the
interval $[-1/4,1/4]$ and $=1$ in the interval $[-1/8,1/8]$, we write 
\begin{equation*}
\chi(\zeta_1 \zeta_2)=\chi(\zeta_1 \zeta_2)
\Theta(\zeta_1,\zeta_2)\Theta^{-1}(\zeta_1,\zeta_2)\tilde{\chi}(\zeta_1
\zeta_2),
\end{equation*}
with $\tilde{\chi}\in C^\infty_c(\mathbb{R})$ supported in $[-1/2,1/2]$ and $%
\tilde{\chi}=1$ on $[-1/4,1/4]$, therefore on the support of $\chi$. Since
by Lemma \ref{lemma5.1} the function $\Theta^{-1}(\zeta_1,\zeta_2)\tilde{\chi%
}(\zeta_1 \zeta_2)$ belongs to $W(\mathcal{F} L^1,L^\infty)$, again by the
product property the last expression is estimated from below as 
\begin{equation*}
\gtrsim \| \chi(\zeta_1\zeta_2)\mathcal{F}_{\sigma}[ W(\varphi_{\lambda},%
\varphi_{\lambda})]\|_{W(\mathcal{F} L^{p^{\prime }},L^{q^{\prime }})}.
\end{equation*}
Consider a function $\psi\in C^\infty_c(\mathbb{R}^d)\setminus\{0\}$,
supported in $[-1/4,1/4]$. Using 
\begin{equation*}
|\zeta_1 \zeta_2|\leq\frac{1}{2}(|\sqrt{\lambda}\zeta_1|^2+|\sqrt{\lambda}%
^{-1}\zeta_2|^2)
\end{equation*}
we see that $\chi(\zeta_1 \zeta_2)=1$ on the support of $\psi(\sqrt{\lambda}
\zeta_1)\psi(\sqrt{\lambda}^{-1}\zeta_2)$, for every $\lambda>0$.

Then, we can write 
\begin{equation*}
\psi(\sqrt{\lambda} \zeta_1)\psi(\sqrt{\lambda}^{-1}\zeta_2)=\chi(\zeta_1
\zeta_2)\psi(\sqrt{\lambda} \zeta_1)\psi(\sqrt{\lambda}^{-1}\zeta_2)
\end{equation*}
and by Lemma \ref{lemma5.2} we also infer 
\begin{equation*}
\|\psi(\sqrt{\lambda} \zeta_1)\psi(\sqrt{\lambda}^{-1}\zeta_2)\|_{W(\mathcal{%
F} L^1,L^\infty)}\lesssim 1
\end{equation*}
so that we can continue the above estimate as 
\begin{equation*}
\gtrsim \|\psi(\sqrt{\lambda} \zeta_1)\psi(\sqrt{\lambda}^{-1}\zeta_2)%
\mathcal{F}_{\sigma}[ W(\varphi_{\lambda},\varphi_{\lambda})]\|_{W(\mathcal{F}
L^{p^{\prime }},L^{q^{\prime }})}.
\end{equation*}
Using (see e.g.\ \cite[Formula (4.20)]{book}) 
\begin{equation}  \label{wignerdil}
W(\varphi_{\lambda},\varphi_{\lambda})(x,\omega)=2^{\frac d2} \lambda^{-\frac
d2} \varphi(\sqrt{2\lambda}\, x)\varphi(\sqrt{\frac 2\lambda}\, \omega),
\end{equation}
we calculate 
\begin{equation*}
\mathcal{F}_{\sigma}[ W(\varphi_{\lambda},\varphi_{\lambda})](\zeta_1,\zeta_2)=
2^{\frac d2} \lambda^{-\frac d2} \varphi((\sqrt{2\lambda})^{-1}\,
\zeta_2)\varphi(\sqrt{\frac{\lambda }2}\, \zeta_1),
\end{equation*}
so that 
\begin{align*}
& \|\psi(\sqrt{\lambda}\zeta_1)\psi(\sqrt{\lambda}^{-1}\zeta_2)\mathcal{F}_%
\sigma[ W(\varphi_{\lambda},\varphi_{\lambda})]\|_{W(\mathcal{F} L^{p^{\prime
}},L^{q^{\prime }})} \\
&=2^{-d/2} \lambda^{-\frac d2}\|\psi(\sqrt{\lambda} \zeta_1)\varphi((1/\sqrt{%
2})\sqrt{\lambda}\, \zeta_1) \|_{W(\mathcal{F} L^{p^{\prime }},L^{q^{\prime
}})}\| \psi(\sqrt{\lambda}^{-1}\zeta_2)\varphi((\sqrt{2\lambda})^{-1}\,
\zeta_2)\|_{{W(\mathcal{F} L^{p^{\prime }},L^{q^{\prime }})}}.
\end{align*}
By Lemma \ref{lemma5.2} we can estimate the last expression so that 
\begin{equation*}
\|Q(\varphi_{\lambda},\varphi_{\lambda})\|_{M^{p^{\prime },q^{\prime }}}\gtrsim
\lambda^{-d+\frac d{2p^{\prime }}+\frac d{2q^{\prime }}}\quad \mathrm{as}\
\lambda\to+\infty.
\end{equation*}
Finally, using the estimate \eqref{ef1} we infer \eqref{indicitutti}.

We now prove that $\max \{1/r_{1},1/r_{1}^{\prime }\}\leq 1/q+1/p$. If we
show the estimate $1/r_{1}\leq 1/q+1/p$, then the constraint $%
1/r_{1}^{\prime }\leq 1/q+1/p$ follows by the duality argument of Lemma \ref%
{dualita2}. To reach this goal, we consider $f_{\lambda }=\varphi $
(independent of the parameter $\lambda $) and $g=\varphi _{\lambda }$ as
before and use the previous pattern for these families of functions, in the
case $\lambda \rightarrow 0^{+}$. By \eqref{eqa0-zero} the upper estimate
becomes 
\begin{equation}
\Vert \varphi \Vert _{M^{r_{1},r_{2}}}\Vert \varphi _{\lambda }\Vert
_{M^{r_{1}^{\prime },r_{2}^{\prime }}}\lesssim \lambda ^{-\frac{d}{%
2r_{1}^{\prime }}}.  \label{ef2}
\end{equation}%
The same arguments as before let us write 
\begin{equation*}
\Vert Q(\varphi ,\varphi _{\lambda })\Vert _{M^{p^{\prime },q^{\prime
}}}\gtrsim \Vert \psi (\sqrt{\lambda }\zeta _{1})\psi (\sqrt{\lambda }%
^{-1}\zeta _{2})\mathcal{F}_{\sigma }[W(\varphi ,\varphi _{\lambda })]\Vert
_{W(\mathcal{F}L^{p^{\prime }},L^{q^{\prime }})},
\end{equation*}%
where $\mathcal{F}_{\sigma }[W(\varphi ,\varphi _{\lambda })]$ is computed
in \eqref{cfwfflsig}. Observe that, given any $F\in W(\mathcal{F}%
L^{p^{\prime }},L^{q^{\prime }})$, 
\begin{align*}
\Vert e^{\pi i\frac{\lambda -1}{%
\lambda +1}\zeta
_{1}\zeta _{2}}F(\zeta _{1},\zeta _{2})\Vert _{W(\mathcal{F}L^{p^{\prime
}},L^{q^{\prime }})}& \gtrsim \Vert e^{-\pi i\frac{\lambda -1}{%
\lambda +1}\zeta _{1}\zeta _{2}}e^{\pi i\frac{\lambda -1}{%
\lambda +1}\zeta _{1}\zeta _{2}}F(\zeta _{1},\zeta
_{2})\Vert _{W(\mathcal{F}L^{p^{\prime }},L^{q^{\prime }})} \\
& =\Vert F(\zeta _{1},\zeta _{2})\Vert _{W(\mathcal{F}L^{p^{\prime
}},L^{q^{\prime }})},
\end{align*}%
because $\Vert e^{-\pi i\frac{\lambda -1}{%
\lambda +1}%
\zeta _{1}\zeta _{2}}\Vert _{W(\mathcal{F}L^{1},L^{\infty })}\leq C,$ for
every $\lambda >0$ by \cite[Proposition 3.2]{cgn2}. So, writing 
\begin{equation*}
c_{\lambda }=\frac{1}{(\lambda +1)^{\frac{d}{2}}}
\end{equation*}%
(notice $c_{\lambda }\rightarrow 1$ for $\lambda \rightarrow 0^{+}$) we
are reduced to  
\begin{equation*}
\Vert Q(\varphi ,\varphi _{\lambda })\Vert _{M^{p^{\prime },q^{\prime
}}}\gtrsim c_{\lambda }\Vert \psi (\sqrt{\lambda }\zeta _{1})e^{- \frac{\pi
\lambda }{\lambda +1}\zeta _{1}^{2}}\Vert _{W(
\mathcal{F}L^{p^{\prime }},L^{q^{\prime }})}\Vert \psi (\sqrt{\lambda }
^{-1}\zeta _{2})e^{-\frac{\pi }{\lambda +1}\zeta
_{2}^{2}}\Vert _{W(\mathcal{F}L^{p^{\prime }},L^{q^{\prime }})}.
\end{equation*}
By Lemma \ref{lemma5.2} we can estimate, for $\lambda \rightarrow 0^{+}$, 
\begin{equation*}
\Vert \psi (\sqrt{\lambda }\zeta _{1})e^{- \frac{\pi
\lambda }{\lambda +1}\zeta _{1}^{2}}\Vert _{W(\mathcal{F}L^{p^{\prime
}},L^{q^{\prime }})}=\Vert \psi (\sqrt{\lambda }\zeta _{1})e^{- \frac{\pi }{\lambda +1}(\sqrt{\lambda }\zeta _{1})^{2}}\Vert
_{W(\mathcal{F}L^{p^{\prime }},L^{q^{\prime }})}\asymp \lambda ^{-\frac{d}{
2q^{\prime }}},
\end{equation*}
whereas 
\begin{align*}
\Vert \psi (\sqrt{\lambda }^{-1}\zeta _{2}))e^{-\frac{\pi }{\lambda +1}\zeta
_{2}^{2}}\Vert _{W(\mathcal{F}L^{p^{\prime }},L^{q^{\prime }})}& =\lambda ^{\frac d2}(\lambda+1)^{\frac d2}\Vert \int \hat{\psi}(\sqrt{\lambda}(\zeta _{2} -\eta ))e^{- \pi (\lambda +1) |\eta |^{2}}\,d\eta \Vert _{L^{p^{\prime }}} \\
& =\lambda ^{\frac d2}(\lambda+1)^{\frac d2}\lambda ^{-\frac d{2p'}}\Vert \int \hat{\psi}(x-\sqrt{\lambda}\eta
))e^{-\pi(\lambda+1) |\eta |^{2}}\,d\eta \Vert _{L^{p^{\prime }}} \\
& =(\lambda+1)^{\frac d2}\lambda ^{-\frac d{2p'}}\Vert \int \hat{\psi}(x-t)e^{-\pi\frac{\lambda+1}{\lambda} |t|^{2}}dt\Vert _{L^{p^{\prime }}} \\
& =\lambda ^{\frac d2-\frac d{2p^{\prime }}}\Vert \hat{\psi}\ast K_{1/\sqrt{\lambda }
}\Vert _{L^{p^{\prime }}} \\
& \sim \lambda^{\frac d2-\frac d{2p^{\prime }}}\Vert \hat{\psi}\Vert _{p^{\prime }},\,\,
\mbox{as}\,\,\lambda \rightarrow 0^{+}
\end{align*}
where $K_{1/\sqrt{\lambda }}(\zeta _{2})=\lambda ^{-\frac d2}(\lambda +1)^{\frac d2}e^{-\frac{\pi (\lambda+1)}{\lambda}|\zeta_2|^2}$, $\lambda \rightarrow 0^{+}$, is an approximate
identity. So that 
\begin{equation*}
\lambda ^{-\frac{d}{2r_{1}^{\prime }}}\gtrsim \lambda ^{-\frac{d}{2q^{\prime
}}}\lambda ^{\frac{d}{2p}}
\end{equation*}%
and, for $\lambda \rightarrow 0^{+}$, we obtain 
\begin{equation*}
\frac{1}{r_{1}}\leq \frac{1}{q}+\frac{1}{p},
\end{equation*}%
as desired. 

It remains to prove that $\max\{1/r_2,1/r_2^{\prime }\}\leq 1/q+1/p$. Again,
it is enough to show that $1/r_2\leq 1/q+1/p$ and invoke Lemma \ref{dualita2}
for $1/r_2^{\prime }\leq 1/q+1/p$.

An explicit computation (see \cite[Proposition 5.3]{cordero2}) shows that 
\begin{equation}  \label{e0}
\mathcal{F}^{-1} \limfunc{Op}\nolimits_{\mathrm{W}}(\sigma)\mathcal{F}=%
\limfunc{Op}\nolimits_{\mathrm{W}}(\sigma\circ J),
\end{equation}
where $J(x,\omega)=(\omega,-x)$ as defined in \eqref{J} (this is also a
consequence of the intertwining property of the metaplectic operator $%
\mathcal{F}$ with the Weyl operator $\limfunc{Op}\nolimits_{\mathrm{W}%
}(\sigma)$ \cite[Corollary 221]{Birkbis}).


Now, observing that $\Theta_{\sigma}\circ J= \Theta_{\sigma}$, we obtain 
\begin{align*}
(a\ast\Theta_{\sigma})(J z)&=\int_{{\mathbb{R}^{2d}}} a(u)\Theta_{\sigma}(J
z-u)\,du=\int_{{\mathbb{R}^{2d}}} a(u) \Theta_{\sigma}(J(z-J^{-1}u)) du \\
&=\int_{{\mathbb{R}^{2d}}} a(u) \Theta_{\sigma}(z-J^{-1}u) du=\int_{{\mathbb{R}%
^{2d}}} a(Ju)\Theta_{\sigma}(z-u)\,du \\
&=(a\circ J)\ast\Theta_{\sigma}(z).
\end{align*}
The previous computations together with \eqref{e0} gives 
\begin{equation*}
\mathcal{F}^{-1} \limfunc{Op}\nolimits_{\mathrm{BJ}}(a) \mathcal{F}=\mathcal{%
F}^{-1} \limfunc{Op}\nolimits_{\mathrm{BJ}}(a\circ J) \mathcal{F}.
\end{equation*}
On the other hand, the map $a\longmapsto a\circ J$ is an isomorphism of $%
M^{p,q}$, so that \eqref{stimamod} is in fact equivalent to 
\begin{equation}  \label{stimamod2}
\|\limfunc{Op}\nolimits_{\mathrm{BJ}}(a) f\|_{W(\mathcal{F}
L^{r_1},L^{r_2})}\lesssim \|a\|_{M^{p,q}}\|f\|_{W(\mathcal{F}
L^{r_1},L^{r_2})}\qquad \forall a\in\mathcal{S}(\mathbb{R}^{2d}),\ f\in%
\mathcal{S}(\mathbb{R}^d).
\end{equation}
The estimate \eqref{stimamod2} can be written as 
\begin{equation*}
|\langle a, Q(f,g)\rangle | \leq C \|a\|_{M^{p,q}}\|f\|_{W(\mathcal{F}
L^{r_1},L^{r_2})}\|g\|_{W(\mathcal{F} L^{r^{\prime }_1},L^{r^{\prime
}_2})}\qquad \forall a\in\mathcal{S}(\mathbb{R}^{2d}),\ f,g\in\mathcal{S}(%
\mathbb{R}^d)
\end{equation*}
which is equivalent to 
\begin{equation*}
\|Q(f,g)\|_{M^{p^{\prime },q^{\prime }}} \leq C \|f\|_{W(\mathcal{F}
L^{r_1},L^{r_2})}\|g\|_{W(\mathcal{F} L^{r^{\prime }_1},L^{r^{\prime
}_2})}\qquad \forall f,g\in\mathcal{S}(\mathbb{R}^d).
\end{equation*}
Now, taking $f=\varphi$ and $g=\varphi_{\lambda}$ as before, we observe that,
for $\lambda\to0^+$, by \eqref{eqa0-zero}, 
\begin{equation*}
\|\varphi_{\lambda}\|_{W(\mathcal{F} L^{r^{\prime }_1},L^{r^{\prime
}_2})}\asymp \lambda^{-\frac{d}{2}}\|\varphi_{1/\lambda}\|_{M^{r^{\prime
}_1,r^{\prime }_2}}\asymp\lambda^{-\frac{d}{2}+\frac{d}{2r_2}} =\lambda^{-%
\frac{d}{2r^{\prime }_2}}.
\end{equation*}
Arguing as in the previous case we obtain $1/r_2\leq 1/q+1/p$. This
concludes the proof.
\end{proof}

\section*{Acknowledgements}

The first author was partially supported by the Italian
Local Project \textquotedblleft Analisi di Fourier per equazioni alle
derivate parziali ed operatori pseudo-differenziali", funded by the
University of Torino, 2013. The second author was supported by the FWF grant
P 2773.

\end{document}